\newcommand{\mmp}{\mathbb{P}}
\newcommand{\me}{\mathbb{E}}
\newcommand{\E}{\mathbb{E}}
\newcommand{\mr}{\mathbb{R}}
\newcommand{\mn}{\mathbb{N}}
\newcommand{\lit}{\underset{t\to\infty}{\lim}}
\DeclareMathOperator{\1}{\mathbbm{1}}
\newtheorem{thm}{Theorem}[section]
\newtheorem{lemma}[thm]{Lemma}
\newtheorem{cor}[thm]{Corollary}
\newtheorem{assertion}[thm]{Proposition}
\theoremstyle{definition}
\theoremstyle{remark}
\newtheorem{rem}[thm]{Remark}
\begin{document}

\title[Stable fluctuations of iterated perturbed random walks]{Stable fluctuations of iterated perturbed random walks in intermediate generations of a general branching process tree}

\author{Alexander Iksanov}
\address{Faculty of Computer Science and Cybernetics, Taras Shevchenko National University of Kyiv, Kyiv, Ukraine}
\email{iksan@univ.kiev.ua}

\author{Alexander Marynych}
\address{Faculty of Computer Science and Cybernetics, Taras Shevchenko National University of Kyiv, Kyiv, Ukraine}
\email{marynych@unicyb.kiev.ua}

\author{Bohdan Rashytov}
\address{Faculty of Computer Science and Cybernetics, Taras Shevchenko National University of Kyiv, Kyiv, Ukraine}
\email{mr.rashytov@gmail.com}

\begin{abstract}
Consider a general branching process, a.k.a.\ Crump-Mode-Jagers process, generated by a perturbed random walk $\eta_1$, $\xi_1+\eta_2$, $\xi_1+\xi_2+\eta_3,\ldots$. Here, $(\xi_1,\eta_1)$, $(\xi_2, \eta_2),\ldots$ are independent identically distributed random vectors with arbitrarily dependent positive components. Denote by $N_j(t)$ the number of the $j$th generation individuals with birth times $\leq t$. Assume that $j=j(t)\to\infty$ and $j(t)=o(t^a)$ as $t\to\infty$ for some explicitly given $a>0$ (to be specified in the paper). The corresponding $j$th generation belongs to the set of intermediate generations. We provide sufficient conditions under which finite-dimensional distributions of the process $(N_{\lfloor j(t)u\rfloor}(t))_{u>0}$, properly normalized and centered, converge weakly to those of an integral functional of a stable L\'{e}vy process with finite mean.
\end{abstract}

\keywords{general branching process; perturbed random walk; stable L\'{e}vy process; weak convergence}

\subjclass[2020]{Primary: 60F05, 60J80;  Secondary: 60K05}

\maketitle

\section{Introduction and main result}\label{Sect1}

\subsection{Definition and motivation}

Let $(\xi_1, \eta_1)$, $(\xi_2, \eta_2),\ldots$ be independent copies of an $\mr^2$-valued random vector $(\xi, \eta)$ with arbitrarily dependent components. Denote by $(S_i)_{i\geq 0}$ the zero-delayed
standard random walk with increments $\xi_i$ for $i\in\mn$, that is, $S_0:=0$ and $S_i:=\xi_1+\cdots+\xi_i$ for $i\in\mn$. Define
\begin{equation*}
T_i:=S_{i-1}+\eta_i,\quad i\in \mn.
\end{equation*}
The sequence $T:=(T_i)_{i\in\mn}$ is called {\it perturbed random walk} (PRW). The so defined PRW is a non-trivial generalization of the standard random walk. Apart from being an interesting object of investigation, the PRW is known to be an important ingredient of perpetuities \cite{Goldie+Maller:2000}, the Bernoulli sieve \cite{Alsmeyer+Iksanov+Marynych:2017,Gnedin+Iksanov+Marynych:2010}, $G/G/\infty$-queues \cite{Iksanov+Jedidi+Bouzeffour:2018}, a perturbed branching random walk \cite{Basrak+Conroy+Cravioto+Palmowski:2022}, to name but a few. A detailed exposition of various PRW's properties and its applications can be found in \cite{Iksanov:2016}.

In what follows we assume that $\xi$ and $\eta$ are almost surely (a.s.) positive. Now we recall the construction of a general branching process generated by $T$. Imagine a population of individuals initiated at time $0$ by one individual, the ancestor. An individual born at time $s\geq 0$ produces offspring whose birth times have the same distribution as $(s+T_i)_{i\in\mn}$. All individuals act independently of each other. An individual resides in the $j$th generation if it has exactly $j$ ancestors. For $j\in\mn$ and $t\geq 0$, denote by $T^{(j)}$ the collection of the birth times in the $j$th generation and by $N_j(t)$ the number of the $j$th generation individuals with birth times $\leq t$. We call the sequence $\mathcal{T}:=(T^{(j)})_{j\in\mn}$ {\it iterated perturbed random walk on a general branching process tree}. Also, we call the $j$th generation {\it early}, {\it intermediate} or {\it late} depending on whether $j$ is fixed, $j=j(t)\to\infty$ and $j(t)=o(t)$ as $t\to\infty$, or $j=j(t)$ is of order $t$. According to Proposition 2.1 in \cite{Bohun etal:2022}, there exists a constant $a_0>0$ such that, for $j\geq at $, $a>a_0$ and large $t$, $N_j(t)=0$ a.s.

The sequence $\mathcal{T}$ has been introduced and used in \cite{Buraczewski+Dovgay+Iksanov:2020} (see also \cite{Iksanov+Marynych+Samoilenko:2020}) as an auxiliary tool in the analysis of the nested occupancy scheme in random environment generated by stick-breaking. Later on, it was realised that $\mathcal{T}$ was an interesting mathematical object on its own. Of particular interest is the question on how the properties of $T$ transform when passing to the early, the intermediate and then the late generations. Answering this question leads to a new generalization of renewal theory for the perturbed random walks, see \cite{Bohun etal:2022} and \cite{Iksanov+Rashytov+Samoilenko:2021+} for the first results in this direction. Although the iterated perturbed random walk on a general branching process tree is a particular instance of a branching random walk, we believe that understanding its properties provides some insight into the behavior of branching random walks and general branching processes with arbitrary (but admissible) inputs.

\subsection{Main result}

Throughout the paper we write 
$\Longrightarrow$, ${\overset{{\rm d}}\longrightarrow}$ and ${\overset{{\rm f.d.d.}}\longrightarrow}$ to denote weak convergence in a function space, weak convergence of one-dimensional and finite-dimensional distributions, respectively. The following result is a combination of Theorems 3.1 and 3.2 in \cite{Iksanov+Marynych+Samoilenko:2020}, see also Section 3 in \cite{Buraczewski+Dovgay+Iksanov:2020} for an earlier weaker version. Put
$$
V_j(t):=\E N_j(t),\quad  j\in\mn,\quad t\geq 0.
$$

\begin{assertion}\label{thm old}
Assume that ${\tt s}^2={\rm Var}\,\xi\in (0,\infty)$ and $\me \eta<\infty$. Let $j=j(t)$ be any positive integer-valued function satisfying $j(t)\to \infty$ and $j(t)=o(t^{1/2})$ as $t\to\infty$. Then, as $t\to\infty$,
\begin{equation}\label{clt22_old}
\left(\frac{\lfloor j(t)\rfloor^{1/2}(\lfloor j(t)u\rfloor-1)!}{({\tt s}^2{\tt m}^{-2\lfloor j(t)u\rfloor-1}t^{2\lfloor j(t)u\rfloor-1})^{1/2}}\bigg(N_{\lfloor j(t)u\rfloor}(t)-V_{\lfloor j(t)u \rfloor}(t)\bigg)\right)_{u>0}\\
{\overset{{\rm f.d.d.}}\longrightarrow}~ \Bigg(\int_{[0,\,\infty)}e^{-uy}{\rm d}\mathcal{S}_2(y)\Bigg)_{u>0},
\end{equation}
where ${\tt m}:=\me\xi<\infty$ and $\mathcal{S}_2:=(\mathcal{S}_2(v))_{v\geq 0}$ is a standard Brownian motion.
\end{assertion}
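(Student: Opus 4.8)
The plan is as follows. Fix $u>0$, write $c_t$ for the normalizing constant in \eqref{clt22_old}, and put $k=k(t):=\lfloor j(t)u\rfloor\to\infty$. Peeling off the first generation gives, conditionally on $\mf_1:=\sigma(T_i:i\in\mn)$, the identity $N_{k}(t)=\sum_{i\geq1}N^{(i)}_{k-1}(t-T_i)$ with $(N^{(i)}_{k-1})_{i\geq1}$ independent copies of $N_{k-1}$ independent of $\mf_1$. Accordingly $N_{k}(t)-V_{k}(t)=I_1(t)+I_2(t)$, where
\[
I_1(t):=\sum_{i\geq1}\bigl(N^{(i)}_{k-1}(t-T_i)-V_{k-1}(t-T_i)\bigr),\qquad I_2(t):=\sum_{i\geq1}V_{k-1}(t-T_i)-V_{k}(t);
\]
$I_1(t)$ is the fluctuation of the first-generation subtrees around their conditional means and $I_2(t)$ is the fluctuation produced by the first-generation point configuration alone. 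I would show that $c_tI_1(t)\tp0$ while $c_tI_2(t)$ carries the limit; the finite-dimensional statement then follows by running the argument for $k_\ell=\lfloor j(t)u_\ell\rfloor$, $\ell=1,\dots,n$, at once, since the corresponding copies of $I_2$ are all built from one and the same first generation. Everywhere one uses the deterministic asymptotics $V_{k}(t)\sim t^{k}/({\tt m}^{k}k!)$ and $v_{k}(t):=V_{k}'(t)\sim t^{k-1}/({\tt m}^{k}(k-1)!)$, \emph{uniform in} $k=O(j(t))$, which follow from $V_1(t)=t/{\tt m}+O(1)$ (finiteness of the remainder uses $\me\eta<\infty$) by iterating $V_{k}(t)=\int_{[0,t]}V_{k-1}(t-y)\,\mu_1({\rm d}y)$, $\mu_1:=\me\sum_{i\geq1}\delta_{T_i}$; the same recursion yields the crude bound ${\rm Var}\,N_{k}(s)=O\bigl({\tt s}^2s^{2k-1}/({\tt m}^{2k+1}((k-1)!)^2)\bigr)$, uniform in $s\leq t$.

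\emph{The negligible pieces.} Given $\mf_1$, $I_1(t)$ is a sum of independent centred terms, so $\me[{\rm Var}(I_1(t)\mid\mf_1)]=\int_{[0,t]}{\rm Var}\,N_{k-1}(t-y)\,\mu_1({\rm d}y)$, and the variance bound shows this to be $O(k/t)\cdot c_t^{-2}$; hence $c_tI_1(t)\tp0$ once $k=o(t)$. For the main term, use $T_i=S_{i-1}+\eta_i$ and set $g_k(x):=\me\,V_{k-1}(x-\eta)$ to split $I_2(t)=A(t)+B(t)$ with
\[
A(t):=\sum_{i\geq0}\bigl(g_k(t-S_i)-\me\,g_k(t-S_i)\bigr),\qquad B(t):=\sum_{i\geq0}\bigl(V_{k-1}(t-S_i-\eta_{i+1})-g_k(t-S_i)\bigr).
\]
Conditionally on $(S_i)_{i\geq0}$ the summands of $B(t)$ are independent and centred, and monotonicity together with convexity of $V_{k-1}$ gives $0\leq V_{k-1}(t-S_i)-V_{k-1}((t-S_i-\eta_{i+1})^+)\leq v_{k-1}(t-S_i)\min(\eta_{i+1},t-S_i)$; since $\min(a,b)^2\leq ab$, this forces ${\rm Var}(B(t)\mid(S_i))\leq\me\eta\sum_{i\geq0}v_{k-1}(t-S_i)^2(t-S_i)$, whose expectation is $O(k^2/t)\cdot c_t^{-2}$. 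Thus $B(t)$ — the fluctuation due to the perturbations $\eta$ — becomes negligible precisely when $k=o(t^{1/2})$; this is the one place where the hypothesis $j(t)=o(t^{1/2})$ enters, and the exponent $1/2$ reflects that only $\me\eta<\infty$ is assumed (with $\me\eta^2<\infty$ it could be improved to $o(t)$).

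\emph{The Gaussian limit.} It remains to analyse $A(t)=\int_{[0,t]}g_k(t-x)\,{\rm d}(\nu-U)(x)$, where $\nu(x):=\#\{i\geq0:S_i\leq x\}$ and $U:=\me\nu$. Stieltjes integration by parts and the substitution $x=t\theta$ give $A(t)\approx\frac{t^{k-1}}{{\tt m}^{k-1}(k-2)!}\int_0^1(1-\theta)^{k-2}(\nu(t\theta)-U(t\theta))\,{\rm d}\theta$, the replacement $g_k'(t(1-\theta))\approx v_{k-1}(t(1-\theta))\sim t^{k-2}(1-\theta)^{k-2}/({\tt m}^{k-1}(k-2)!)$ being legitimate by the uniform asymptotics. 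Next rescale the renewal process by its local scale $t/j(t)\to\infty$: with $\tilde\nu_t(v):={\tt s}^{-1}{\tt m}^{3/2}(t/j(t))^{-1/2}(\nu((t/j(t))v)-U((t/j(t))v))$ and $\theta=v/j(t)$, a short computation shows $c_tI_2(t)=\int_0^{j(t)}(1-v/j(t))^{k-1}\,{\rm d}\tilde\nu_t(v)+o_{\mmp}(1)$. By the functional CLT for renewal processes $\tilde\nu_t\Longrightarrow\mathcal{S}_2$ in $D[0,\infty)$, while $(1-v/j(t))^{k-1}\to e^{-uv}$ uniformly on compacts and is dominated by an integrable exponential; a routine truncation-and-second-moment argument then gives $\int_0^{j(t)}(1-v/j(t))^{k-1}\,{\rm d}\tilde\nu_t(v)\dod\int_{[0,\infty)}e^{-uv}\,{\rm d}\mathcal{S}_2(v)$. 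For $u_1,\dots,u_n$ these integrals are all taken against the \emph{same} $\tilde\nu_t$ with deterministic kernels $(1-v/j(t))^{k_\ell-1}\to e^{-u_\ell v}$, so the vector converges jointly in distribution to $(\int_{[0,\infty)}e^{-u_\ell y}\,{\rm d}\mathcal{S}_2(y))_{\ell=1}^n$ — this is \eqref{clt22_old}; the constant is confirmed by $\me[\int e^{-uy}\,{\rm d}\mathcal{S}_2\cdot\int e^{-u'y}\,{\rm d}\mathcal{S}_2]=(u+u')^{-1}$.

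\emph{The main difficulty.} I expect the technical heart to be the package of uniform-in-$k$ estimates for $V_k$, $v_k$, and ${\rm Var}\,N_k$ on the range $k=O(j(t))$ — they justify every substitution of an explicit leading term into the integrals and the bounds on $I_1$ and $B$ — together with the verification that the concentration of the kernel $(1-v/j(t))^{k-1}$ near $v=0$ commutes with the weak limit $\tilde\nu_t\Longrightarrow\mathcal{S}_2$, i.e.\ that the range of large $v$, where that kernel is exponentially small but $\mathcal{S}_2$ still has order-one fluctuations, contributes negligibly. Both are handled by standard truncation and moment estimates, but they carry the weight of the proof.
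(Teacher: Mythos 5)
Your overall plan mirrors the paper's strategy for its Theorem~\ref{main100} (which is the exact counterpart of this Proposition): the starting point is identical, namely the first-generation decomposition $N_k-V_k=I_1+I_2$ via \eqref{basic1232}, with the martingale part $I_1$ killed by a variance bound and the shot-noise part $I_2$ carrying the limit, combined with a Cram\'er--Wold reduction in which all $u_\ell$ share the same first generation. Where you genuinely diverge is in the treatment of $I_2$: the paper writes $I_2(t)=\int_{[0,t]}V_{k-1}(t-y)\,{\rm d}(N(y)-V(y))$ and feeds it the functional CLT for the \emph{perturbed} counting process $N-V$ (Lemma~\ref{lem:flt}, imported from \cite{Alsmeyer+Iksanov+Marynych:2017}), passing to the limit via Skorokhod representation and Lemma~\ref{iks2013}. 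You instead split $I_2=A+B$, isolating the pure renewal integrator $\nu-U$ in $A$ and pushing the perturbation $\eta$ into a second martingale $B$, so that only the classical renewal FCLT is needed. This buys you a more self-contained, elementary argument (you never invoke the PRW FCLT), at the price of an extra variance estimate for $B$ and a replacement $g_k\approx V_{k-1}$ that must be controlled.

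Two concrete gaps to flag. First, the bound $V_{k-1}(x)-V_{k-1}((x-\eta)^+)\le v_{k-1}(x)\min(\eta,x)$ presupposes that $V_{k-1}$ is absolutely continuous with a nondecreasing density $v_{k-1}$. Neither is available in general: $V=V_1$ can have atoms (deterministic $\eta$ and lattice $\xi$), so $V_{k-1}=V^{*(k-1)}$ need not have a density, and even when it does, monotonicity of the density is not automatic. The paper works entirely with Stieltjes integrals and the Lorden-type bound $V(x+y)-V(x)\le U(y)$ (used after \eqref{dj}) precisely to avoid this. You would need a density-free Lipschitz-type estimate such as $V_{k-1}(x)-V_{k-1}(y)\leq L_{k-1}(x)\,(x-y)+\text{error}$, established inductively from $U(x)-U(y)\le {\tt m}^{-1}(x-y)+c$, uniformly over the range $k=O(j(t))$; this is the same delicate ``uniform-in-$k$'' work you already anticipate as the technical heart, but it has to be carried out without appealing to $v_{k-1}$. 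Second, your claim that $c_t^2\,\me I_1(t)^2=O(k/t)$ and hence ``$I_1$ only needs $k=o(t)$'' is not what comes out if one uses a variance bound of the stated form ${\rm Var}\,N_m(s)=O({\tt s}^2 s^{2m-1}/({\tt m}^{2m+1}((m-1)!)^2))$: the normalization $c_t$ contains a factor $j^{1/2}$ (equivalently, $c_\alpha(t/j)\asymp(t/j)^{1/2}$), and squaring gives $c_t^2\,\me I_1^2=O(j(k-1)/t)=O(j^2/t)$ --- cf.\ the paper's estimate \eqref{asymp} fed into \eqref{eq:aux12}. Thus $I_1$ \emph{also} imposes $j=o(t^{1/2})$; it is not true that the threshold $t^{1/2}$ enters only through $B$. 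This does not affect the validity of the final condition, but the bookkeeping and the stated moral (``only $\me\eta<\infty$ is the reason for $1/2$'') should be corrected.

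Minor points: the kernel should read $(1-v/j)^{k-2}$ after your integration by parts, though the discrepancy with $(1-v/j)^{k-1}$ is asymptotically immaterial; and the asserted uniformity in $s\le t$ of ${\rm Var}\,N_k(s)$ cannot hold down to $s=0$ --- the paper handles this by splitting the convolution integral at a threshold $a_j\asymp j^{2/(\gamma-1)}$ and bounding the near-endpoint piece separately, and you would need the same device.
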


In this article we intend to prove a counterpart of Proposition \ref{thm old} under the assumptions that ${\tt m}<\infty$, ${\tt s}^2=\infty$ and that the distribution of $\xi$ belongs to the domain of attraction of a stable distribution. More precisely, we assume that one of the following conditions holds:

\noindent {\sc Condition RW I}: ${\tt s}^2=\infty$ and, for some $\ell$ slowly varying at infinity,
\begin{equation}    \label{eq:slowly varying 2nd moment}
\me (\xi^2 \1_{\{\xi \leq t\}})~\sim~\ell(t),\quad t\to\infty,
\end{equation}
in which case the distribution of $\xi$ belongs to the (non-normal) domain of attraction of a normal distribution, or

\noindent {\sc Condition RW II}: for some $\alpha\in (1,2)$ and some $\ell$ slowly varying at infinity,
\begin{equation}\label{eq:domain alpha}
\mmp\{\xi>t\}~\sim~t^{-\alpha}\ell(t),\quad t\to\infty,
\end{equation}
in which case the distribution of $\xi$ belongs to the domain of attraction
of an $\alpha$-stable distribution.

Assume that \eqref{eq:domain alpha} holds with $\alpha=1$. There exist slowly varying $\ell$ for which ${\tt m}<\infty$. Thus, in principle, this situation could have also been considered. However, we do not treat the case $\alpha=1$, for it is technically more complicated than the
others and does not shed any new light on weak convergence that we are interested in.

We shall write $N$ for $N_1$, that is, $N(t):=\sum_{i\geq 1}\1_{\{T_i\leq t\}}$ for $t\geq 0$. Denote by $D$ the Skorokhod space of right-continuous functions defined on $[0,\infty)$ with finite limits from the left at positive points. For later needs, we recall the following functional limit theorems, obtained in Theorem 3.2 of \cite{Alsmeyer+Iksanov+Marynych:2017}, for the process $(N(ty))_{y\geq 0}$ as $t\to\infty$: under the additional assumption $\me\eta^a<\infty$ for some $a>0$,
\begin{equation}    \label{eq:FLT for N(t)}
\Big(\frac{N(ty)-{\tt m}^{-1}\int_0^{ty}\mmp\{\eta\leq x\}{\rm d}x}{{\tt m}^{-1-1/\alpha}c_\alpha(t)}\Big)_{y\geq 0} ~\Longrightarrow~ (\mathcal{S}_\alpha(y))_{y\geq 0},\quad t\to\infty.
\end{equation}
Here,
\begin{itemize}
\item
under Condition RW I $\alpha=2$, $\mathcal{S}_2$ is a standard Brownian motion, $c_2(t)$ is a positive function satisfying
\begin{equation*}
\lim_{t\to\infty} t \ell(c_2(t)) / c^2_2(t)= 1,
\end{equation*}
and the convergence takes place in the $J_1$-topology on $D$;
\item under Condition RW II $\mathcal{S}_\alpha:=(\mathcal{S}_\alpha(u))_{u \geq 0}$ is a
spectrally negative $\alpha$-stable L\'{e}vy process such that
$\mathcal{S}_\alpha(1)$ has the characteristic function
\begin{equation}\label{char}
\mathbb{E} \exp({\rm i}z\mathcal{S}_\alpha(1))= \exp\{-|z|^\alpha
\Gamma(1-\alpha)(\cos(\pi\alpha/2)+{\rm i}\,{\rm sgn}(z)\sin(\pi\alpha/2))\},\quad z\in\mr,
\end{equation}
where $\Gamma(\cdot)$ denotes Euler's gamma function, $c_{\alpha}(t)$ is a positive function satisfying
\begin{equation*}
\lit t\ell(c_\alpha(t))/c^\alpha_\alpha(t)=1,
\end{equation*}
the convergence takes place in the $M_1$-topology on $D$.
\end{itemize}

Comprehensive information concerning the $J_1$- and $M_1$-convergence on $D$ can be found in the monographs \cite{Billingsley:1999, Jacod+Shiryaev:2003} and \cite{Whitt:2002}, respectively.

We recall that $\me \xi^\gamma<\infty$ for all $\gamma\in (0,\alpha)$ whenever either Condition RW I or RW II holds. Further, $\me \xi^2=\infty$ under Condition RW I. In contrast, $\me \xi^\alpha$ may be finite or infinite under Condition RW II. 
After this discussion we are ready to state assumptions on the distribution of $\eta$.

\noindent {\sc Condition Pert($\gamma$)}. If $\alpha\in(1,2)$ and $\me\xi^\alpha<\infty$, we set $\gamma:=\alpha$ and assume that
\begin{equation}\label{eq:gamma_def}
\me (\eta\wedge t)=O(t^{2-\gamma}),\quad t\to\infty.
\end{equation}
If $\alpha\in(1,2]$ and $\me\xi^\alpha=\infty$ we assume that \eqref{eq:gamma_def} holds for some $\gamma\in (2-1/\alpha,\alpha)$.

Here is our main result.
\begin{thm}\label{main100}
Assume that $\me\xi^2=\infty$, that the distribution of $\xi$ belongs to the domain of attraction of an $\alpha$-stable distribution, $\alpha\in (1,2]$ and that Condition {\sc Pert($\gamma$)} holds. Let $j=j(t)$ be any positive integer-valued function satisfying $j(t)\to \infty$ and $j(t)=o(t^{(\gamma-1)/2})$ as $t\to\infty$. Then, as $t\to\infty$,
\begin{equation}\label{clt22}
\left(\frac{(\lfloor j(t)u\rfloor-1)!{\tt m}^{\lfloor j(t)u\rfloor+1/\alpha}}{t^{\lfloor j(t)u\rfloor-1}c_\alpha(t/j(t))}\bigg(N_{\lfloor j(t)u\rfloor}(t)-V_{\lfloor j(t)u \rfloor}(t)\bigg)\right)_{u>0}\\
{\overset{{\rm f.d.d.}}\longrightarrow}~ \Bigg(\int_{[0,\,\infty)}e^{-uy}{\rm d}\mathcal{S}_\alpha(y)\Bigg)_{u>0},
\end{equation}
where ${\tt m}=\me\xi<\infty$, $\mathcal{S}_2$ is a standard Brownian motion and $\mathcal{S}_\alpha$ is a
spectrally negative $\alpha$-stable L\'{e}vy process with characteristic function \eqref{char}.
\end{thm}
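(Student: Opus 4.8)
\medskip

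\noindent\emph{Proof strategy.} The plan is to adapt the proof of Theorems~3.1 and~3.2 in \cite{Iksanov+Marynych+Samoilenko:2020}, replacing the central limit theorem for $N=N_1$ by the $\alpha$-stable functional limit theorem \eqref{eq:FLT for N(t)}. Fix $0<u_1<\cdots<u_p$, put $n_m:=\lfloor j(t)u_m\rfloor$, write $n$ for a generic $n_m$ and $C_n:=(n-1)!\,{\tt m}^{\,n+1/\alpha}\,\big(t^{n-1}c_\alpha(t/j(t))\big)^{-1}$. Decomposing the $n$th generation according to which first-generation individual (born at $T_i$) is its ancestor, and using that the subtree rooted at that individual is an independent copy of the whole tree with generation index shifted by one, we obtain
\begin{equation*}
N_n(t)=\int_{[0,\,t]}V_{n-1}(t-x)\,{\rm d}N(x)\;+\;\sum_{i\geq 1}\big(N_{n-1}^{(i)}(t-T_i)-V_{n-1}(t-T_i)\big)\1_{\{T_i\leq t\}}\;=:\;M_n(t)+R_n(t),
\end{equation*}
where $N_{n-1}^{(1)},N_{n-1}^{(2)},\ldots$ are independent copies of $N_{n-1}$ independent of $T=(T_i)_{i\geq1}$. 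Here $\me(R_n(t)\,|\,T)=0$ and $\me M_n(t)=V_n(t)$, so $N_n(t)-V_n(t)=\big(M_n(t)-V_n(t)\big)+R_n(t)$; moreover $M_n(t)-V_n(t)=\int_{[0,\,t]}V_{n-1}(t-x)\,{\rm d}(N-V_1)(x)$ with $V_1=\me N$, so the terms $M_{n_1}(t)-V_{n_1}(t),\ldots,M_{n_p}(t)-V_{n_p}(t)$ are deterministic functionals of the \emph{same} centered renewal-type process $\widetilde N:=N-V_1$. We will show that, scaled by $C_n$, the first term carries the whole limit while $R_n(t)$ is asymptotically negligible.

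\medskip

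\noindent\emph{The main term.} Integration by parts twice (boundary terms vanish since $V_{n-1}(0)=0$ for $n\geq2$ --- which holds eventually, as $j(t)\to\infty$ --- and $\widetilde N(0)=0$) yields the identity
\begin{equation*}
M_n(t)-V_n(t)=\int_0^t\widetilde N(x)\,V_{n-1}'(t-x)\,{\rm d}x .
\end{equation*}
Condition {\sc Pert}($\gamma$) forces $\mmp\{\eta>t\}=O(t^{1-\gamma})$ and hence $\me\eta^a<\infty$ for $a\in(0,\gamma-1)$, so \eqref{eq:FLT for N(t)} applies; using it on the time scale $t/j(t)\to\infty$, the process $\big(\widetilde N((t/j(t))v)\big)_{v\geq0}$, normalized by ${\tt m}^{-1-1/\alpha}c_\alpha(t/j(t))$, converges weakly on $D$ (in $J_1$ under Condition {\sc RW~I}, in $M_1$ under Condition {\sc RW~II}) to $\mathcal{S}_\alpha$. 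Combining this with the intermediate-generations asymptotics of the renewal-type function, namely $V_{n-1}'(t)\sim t^{\,n-2}\big({\tt m}^{\,n-1}(n-2)!\big)^{-1}$ and $V_{n-1}'(t(1-w/j(t)))/V_{n-1}'(t)\to e^{-u_m w}$ uniformly enough in $w$ and $n=n_m$ (the latter via $(1-w/j(t))^{\,n_m}\to e^{-u_m w}$), the change of variables $x=(t/j(t))w$ and an extended continuous-mapping argument (the map $\phi\mapsto\int_0^\infty\phi(w)e^{-u_m w}\,{\rm d}w$ is continuous on $D$ in both topologies, up to a uniform-integrability control of the contributions of large $w$ and of $w\asymp j(t)$, where $V_{n-1}'$ is negligible) give, jointly in $m=1,\dots,p$,
\begin{equation*}
C_{n_m}(t)\big(M_{n_m}(t)-V_{n_m}(t)\big)\;\dod\;u_m\int_0^\infty\mathcal{S}_\alpha(w)e^{-u_m w}\,{\rm d}w\;=\;\int_{[0,\,\infty)}e^{-u_m y}\,{\rm d}\mathcal{S}_\alpha(y),
\end{equation*}
the last equality by integration by parts ($\mathcal{S}_\alpha(w)e^{-u_m w}\to0$ a.s.), where $\mathcal{S}_\alpha$ is one and the same copy for all $m$.

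\medskip

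\noindent\emph{The remainder, and conclusion.} Given $T$, $R_n(t)=\sum_i Z_i$ with $Z_i:=(N_{n-1}^{(i)}(t-T_i)-V_{n-1}(t-T_i))\1_{\{T_i\leq t\}}$ independent and centered. Under Condition {\sc RW~I} ($\alpha=2$) second moments are available: from $\operatorname{Var}(R_n(t)\,|\,T)=\sum_i\operatorname{Var}(Z_i)$, the asymptotics of $\operatorname{Var}N$ and of $V_k'$, and iteration over generations, one gets $C_n(t)^2\operatorname{Var}R_n(t)=O\big(\sum_{k\geq1}({\tt m}\,n/t)^k\big)=O({\tt m}\,n/t)\to0$. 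Under Condition {\sc RW~II} one replaces variances by $\rho$th moments for a suitable $\rho\in(1,\alpha)$: the von Bahr--Esseen inequality gives $\me|R_n(t)|^\rho\leq2\int_{[0,\,t]}\me\big|N_{n-1}(t-x)-V_{n-1}(t-x)\big|^\rho\,{\rm d}V_1(x)$, which unfolds recursively over generations, and bounding the weighted $L^\rho$-norms of the counting functions of the PRW via Condition {\sc Pert}($\gamma$) produces a geometric-type series in the generation index that tends to $0$ precisely when $j(t)=o(t^{(\gamma-1)/2})$. In either case $C_n(t)R_n(t)\tp0$, so by Slutsky's lemma together with the joint convergence of the main terms we obtain \eqref{clt22}.

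\medskip

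\noindent\emph{Expected main obstacle.} Two points will require real work. First, the deterministic intermediate-generations asymptotics of $V_n$ and $V_n'$ with error terms uniform over $n$ of order $j(t)$ and arguments of order $t$: this genuinely extends renewal theory, and it is here --- together with the $L^\rho$-estimates of the remainder step --- that the growth restriction $j(t)=o(t^{(\gamma-1)/2})$ and Condition {\sc Pert}($\gamma$) are used in an essential way. Second, the control of $R_n(t)$ under Condition {\sc RW~II}, where the absence of second moments forces delicate $L^\rho$-estimates ($\rho<\alpha$) for weighted counting functions of the perturbed random walk, which then have to be summed over all $n$ generations.
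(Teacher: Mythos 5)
Your overall plan mirrors the paper's: the same decomposition $N_n-V_n=(\text{shot-noise part})+(\text{martingale part})$ obtained from \eqref{basic1232}, the stable FLT \eqref{eq:FLT for N(t)} applied on the scale $t/j(t)$, a Skorokhod-representation/continuous-mapping step to pass the convergence through the shot-noise integral (the paper packages this as Lemma~\ref{iks2013}), an $L^1$-estimate of $N(t)-V(t)$ for the tail control of the integral (Lemma~\ref{aux123}), and a moment bound to kill the martingale part. The two points you flag as the real obstacles --- uniform intermediate-generation asymptotics of $V_n$ and moment control of $R_n$ --- are indeed where the paper expends most of its effort (Lemma~\ref{prop:convolutions1}, Corollary~\ref{mean1}, and the long Step 1--Step 2 argument in Section 3.2).

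The place where you genuinely deviate is the treatment of $R_n$ under Condition {\sc RW~II}. You write that ``the absence of second moments forces delicate $L^\rho$-estimates'' and propose a von Bahr--Esseen recursion in $\rho\in(1,\alpha)$. This premise is off: although $\operatorname{Var}\xi=\infty$, the counting process $N_j(t)$ has \emph{all} moments finite (it is dominated by a product of first-passage counts $\nu(\cdot)$ of a positive-increment random walk), so $D_j(t)=\operatorname{Var}N_j(t)<\infty$ unconditionally. The paper exploits exactly this: it derives the exact identity $D_j=D_{j-1}\ast V+I_j$ (formula \eqref{aux5}), bounds $I_j$ using only the renewal function $U$ and the mean functions $V_k$ (formula \eqref{259}), and never distinguishes between {\sc RW~I} and {\sc RW~II} in the martingale step. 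Your $L^\rho$ route is not merely more work; it is delicate to close. The term $\E\big|\int V_{n-2}(s-y)\,{\rm d}(N-V)(y)\big|^\rho$ is not a sum of independent centered terms, so von Bahr--Esseen does not apply to it directly, and unfolding the recursion over $\sim j(t)$ generations with $|x+y|^\rho\le 2^{\rho-1}(|x|^\rho+|y|^\rho)$ risks an uncontrolled $2^{(\rho-1)j(t)}$ factor. The variance identity has no such loss. (The $L^\rho$ estimates with $\rho\in(1,\alpha)$ do appear in the paper, but only in Lemma~\ref{aux123} to get uniform integrability of $(N(t)-V(t))/c_\alpha(t)$, which you correctly identify as needed for the outer-integral tail control.)

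Two smaller quantitative/technical gaps. First, your claimed rate $C_n(t)^2\operatorname{Var}R_n(t)=O({\tt m}\,n/t)$ under {\sc RW~I} is too optimistic; the paper's bound is $O\big(j\,t^{2-\gamma}/c_\alpha^2(t/j)\big)=O\big((j^2/t^{\gamma-1})\cdot (t/j)/c_\alpha^2(t/j)\big)$, and it is precisely the factor $j^2/t^{\gamma-1}\to0$ that requires $j(t)=o(t^{(\gamma-1)/2})$; the condition $j/t\to0$ would not suffice. Second, writing $M_n(t)-V_n(t)=\int_0^t\widetilde N(x)V_{n-1}'(t-x)\,{\rm d}x$ presumes differentiability of $V_{n-1}$, which is not given (it depends on the law of $\eta$); the paper avoids this by staying with Lebesgue--Stieltjes integrals ${\rm d}_y\big(-V_{\lfloor ju\rfloor-1}(t-y)\big)$ throughout, and the key pointwise limit \eqref{impo121} is stated for $V_{j-1}$ itself, not its derivative.
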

\begin{rem}
One can check that the inequality $\me \eta^{\gamma-1}<\infty$ ensures Condition {\sc Pert($\gamma$)}, and that Condition {\sc Pert($\gamma$)} guarantees that $\me \eta^{\gamma-1-\delta}<\infty$ for any $\delta\in (0,\gamma-1)$. The latter means that under the assumptions of Theorem \ref{main100} relation \eqref{eq:FLT for N(t)} holds.
\end{rem}
\begin{rem}
The limit process in Theorem \ref{main100}, that we denote by $L_\alpha$, is actually defined as the result of integration by parts:
$$
L_\alpha(u)=u\int_0^{\infty} e^{-uy}\mathcal{S}_\alpha(y){\rm d}y,\quad u>0.
$$ One can check that this definition produces the same process as an alternative definition appearing in Theorem \ref{main100} in which $L_\alpha$ is understood as the stochastic integral with the integrator being a semimartingale. Note that the process $L_\alpha$ is a.s.\ continuous and self-similar with {\it negative} index $-1/\alpha$, that is, for any $a>0$, any $r\in\mn$ and any $0<u_1<\ldots<u_r<\infty$, the vector $(L_\alpha(au_1),\ldots, L_\alpha(au_r))$ has the same distribution as $a^{-1/\alpha}(L_\alpha(u_1),\ldots, L_\alpha(u_r))$.

Assume that $\alpha\in (1,2)$. The process $\mathcal{S}_\alpha$ which describes the limit fluctuations of $N_1=N$ (in the first generation) is a.s.\ discontinuous. The structure of the process $L_\alpha$ indicates that the limit fluctuations of $N_j$ (in the intermediate generations $j$) are driven by two factors: (i) the fluctuations of the input process $N_1$ which are governed by $\mathcal{S}_\alpha$; (ii) the renewal structure of the tree which is reflected in the function $u\mapsto e^{-uy}$. Furthermore, we see that the renewal structure of the tree makes the limit $L_\alpha$ continuous, thereby smoothing out the fluctuations of the input process.
\end{rem}

The remainder of the paper is structured as follows. Some auxiliary results are stated and proved in Section \ref{auxstat}. The proof of Theorem \ref{main100} is given in Section \ref{sect:flt}.

\section{Auxiliary results}\label{auxstat}

The Lebesgue--Stieltjes convolution of functions $r,s:[0,\infty)\to [0,\infty)$ of locally bounded variation is given by
$$
(r\ast s)(t) = \int_{[0,\,t]} r(t-y){\rm d}s(y)=\int_{[0,\,t]} s(t-y){\rm d}r(y),\quad t\geq 0.
$$
We write $r^{\ast (j)}$ for the $j$-fold Lebesgue--Stieltjes convolution of $r$ with itself.

We proceed by recalling an extended version of Proposition 3.1 in \cite{Bohun etal:2022}. Inequality \eqref{ineq} is not a part of the cited result, it is contained in its proof.
\begin{lemma}\label{prop:convolutions1}
Let $f:\mr\to [0,\,\infty)$ be a nondecreasing right-continuous function vanishing on the negative half-line and satisfying
\begin{equation}\label{eq:asymp_exp_assump}
f(t)=at+O(t^{\beta}),\quad t\to\infty
\end{equation}
for some $a>0$ and $\beta\in [0,1)$. Then, for some constant $C\geq 1$,
\begin{equation}\label{ineq}
\Big|f^{\ast(j)}(t)-\frac{a^j t^j}{j!}\Big|\leq \sum_{i=0}^{j-1}\binom{j}{i}\frac{a^i C^{j-i}(t+1)^{\beta(j-i)+i}}{i!},\quad j\in\mn,~~ t\geq 0.
\end{equation}
In particular, for any integer-valued function $j=j(t)$ satisfying $j(t)=o(t^{(1-\beta)/2})$ as $t\to\infty$,
$$
f^{\ast(j)}(t)~\sim~\frac{a^j t^j}{j!},\quad t\to\infty.
$$
\end{lemma}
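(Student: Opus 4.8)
The plan is to prove inequality \eqref{ineq} by induction on $j$, writing $R_j(t)$ for its right-hand side, and then to read off the asymptotic equivalence as an easy corollary.

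\emph{Setup.} First I would fix a constant $C\geq 1$ with $|f(t)-at|\leq C(t+1)^\beta$ for \emph{all} $t\geq 0$: for large $t$ this is \eqref{eq:asymp_exp_assump}, on compacts $|f(t)-at|$ is bounded by monotonicity of $f$, and $(t+1)^\beta\geq 1$ because $\beta\geq 0$. Then I would split $f=\ell+h$ on $[0,\infty)$, with $\ell(t):=at\1_{[0,\infty)}(t)$ carrying the linear part, so that $\sup_{0\leq u\leq t}|h(u)|\leq C(t+1)^\beta$ (as $(u+1)^\beta$ is nondecreasing). Two structural facts drive the argument: $\mathrm{d}\ell=a\,\mathrm{d}x$ on $[0,\infty)$, while $\mathrm{d}(f^{\ast(j-1)})=(\mathrm{d}f)^{\ast(j-1)}$ is a \emph{nonnegative} measure with total mass $f^{\ast(j-1)}(t)$ on $[0,t]$. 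By contrast, $\mathrm{d}h$ is a genuinely signed measure whose total variation on $[0,t]$ is in general only $O(t)$, not $O(t^\beta)$; hence any bound got by expanding $(\ell+h)^{\ast j}$ multinomially and estimating $h^{\ast k}$ through $|\mathrm{d}h|^{\ast k}$ would be hopelessly crude. Circumventing this is the crux of the proof.

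\emph{Induction.} The base case $j=1$ is just $|f(t)-at|=|h(t)|\leq C(t+1)^\beta=R_1(t)$. For the step I would use $f^{\ast(j)}=f^{\ast(j-1)}\ast f$ and the splitting $\mathrm{d}f=a\,\mathrm{d}x+\mathrm{d}h$ (extending $\ast$ bilinearly to the signed part), so that $f^{\ast(j)}(t)=a\int_0^t f^{\ast(j-1)}(s)\,\mathrm{d}s+\int_{[0,t]}h(t-y)\,\mathrm{d}(f^{\ast(j-1)})(y)$. Into the first term I would substitute the inductive estimate $f^{\ast(j-1)}(s)=a^{j-1}s^{j-1}/(j-1)!+\theta(s)$ with $|\theta(s)|\leq R_{j-1}(s)$; this yields exactly the main term $a^jt^j/j!$ plus an error $\leq a\int_0^t R_{j-1}(s)\,\mathrm{d}s$, which I would estimate termwise via $\int_0^t(s+1)^q\,\mathrm{d}s\leq(t+1)^{q+1}/(q+1)$ — the gain $1/(q+1)$ being essential. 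For the second term, nonnegativity of $\mathrm{d}(f^{\ast(j-1)})$ gives the crude but sufficient bound $\big|\int_{[0,t]}h(t-y)\,\mathrm{d}(f^{\ast(j-1)})(y)\big|\leq\big(\sup_{0\leq u\leq t}|h(u)|\big)f^{\ast(j-1)}(t)\leq C(t+1)^\beta f^{\ast(j-1)}(t)$, into which I would feed $f^{\ast(j-1)}(t)\leq a^{j-1}(t+1)^{j-1}/(j-1)!+R_{j-1}(t)$. Grouping the resulting contributions according to the monomials $\binom jm a^mC^{j-m}(t+1)^{\beta(j-m)+m}/m!$ and using $\binom{j-1}{m-1}=\tfrac mj\binom jm$, $\binom{j-1}{m}=\tfrac{j-m}{j}\binom jm$ together with the gains $1/(q+1)$ from the integration, I expect the coefficient of each monomial to add up to exactly $1$, i.e. precisely to $R_j(t)$; so the induction closes with the same $C$.

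\emph{Asymptotics.} For the ``in particular'' statement I would divide \eqref{ineq} by $a^jt^j/j!$; reindexing by $k=j-i$ and using $j!/(j-k)!\leq j^k$, $\binom jk\leq j^k/k!$ and $(t+1)^{\beta k+j-k}/t^j=(1+1/t)^j(t+1)^{-(1-\beta)k}$ yields
\[
\frac{R_j(t)}{a^jt^j/j!}\leq\Big(1+\tfrac1t\Big)^{\!j}\sum_{k\geq1}\frac1{k!}\Big(\frac{Cj^2}{a(t+1)^{1-\beta}}\Big)^{\!k}\leq\Big(1+\tfrac1t\Big)^{\!j}\Big(\exp\big(Cj^2/(a\,t^{1-\beta})\big)-1\Big),
\]
which tends to $0$ once $j(t)=o(t^{(1-\beta)/2})$, since then $j^2/t^{1-\beta}\to0$ and $j/t\to0$. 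Hence $f^{\ast(j)}(t)\sim a^jt^j/j!$. As flagged above, the one genuine difficulty is organizing the induction so that $h$ is only ever integrated in $\mathrm{d}x$ (through $\ell$) or used as a \emph{bounded integrand} against the positive measure $\mathrm{d}(f^{\ast(j-1)})$, and then checking that the binomial bookkeeping reproduces the constants in $R_j$ exactly — rather than up to a factor depending on $j$.
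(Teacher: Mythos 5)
Your proof is correct, and the induction is set up in the way that actually closes. Since the paper only cites Proposition~3.1 of \cite{Bohun etal:2022} for inequality \eqref{ineq} rather than proving it, there is no in-paper proof to compare against; but your argument is a complete and self-contained derivation, and it is the natural one. Let me flag the two points on which the whole thing hinges, both of which you handled correctly.

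First, the asymmetric treatment of the two pieces of $\mathrm{d}f = a\,\mathrm{d}x + \mathrm{d}h$ is essential: you integrate the inductive estimate for $f^{\ast(j-1)}$ against $a\,\mathrm{d}x$, and you integrate the bounded function $h(t-\cdot)$ against the \emph{nonnegative} measure $\mathrm{d}(f^{\ast(j-1)})$, bounding the latter by $\bigl(\sup_{0\le u\le t}|h(u)|\bigr)\,f^{\ast(j-1)}(t)\le C(t+1)^{\beta}f^{\ast(j-1)}(t)$. As you observe, going through the total variation of $\mathrm{d}h$ would destroy the order of the error.

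Second, the coefficient bookkeeping does close with the \emph{same} $C$, and it is worth noting why: writing $R_j(t)=\sum_{m=0}^{j-1}\binom{j}{m}\frac{a^mC^{j-m}(t+1)^{\beta(j-m)+m}}{m!}$, the term $a\int_0^t R_{j-1}(s)\,\mathrm{d}s$ contributes (after using $\int_0^t(s+1)^q\,\mathrm{d}s\le(t+1)^{q+1}/(q+1)\le (t+1)^{q+1}/(i+1)$ and shifting $i\mapsto m=i+1$) the coefficient $\binom{j-1}{m-1}$ on the $m$-th monomial for $1\le m\le j-1$; the term $C(t+1)^{\beta}R_{j-1}(t)$ contributes $\binom{j-1}{m}$ for $0\le m\le j-2$; and the term $C(t+1)^{\beta}\frac{a^{j-1}(t+1)^{j-1}}{(j-1)!}$ contributes $1$ to the $m=j-1$ slot. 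Summing gives $\binom{j-1}{m-1}+\binom{j-1}{m}=\binom{j}{m}$ for $1\le m\le j-2$, $\binom{j-1}{0}=\binom{j}{0}$ at $m=0$, and $(j-1)+1=j=\binom{j}{j-1}$ at $m=j-1$ — exactly Pascal's rule, so the coefficients match $R_j$ with no slack. Your asymptotic bound $\frac{R_j(t)}{a^jt^j/j!}\le(1+1/t)^j\bigl(\exp(Cj^2/(a\,t^{1-\beta}))-1\bigr)$ is also correct and gives the ``in particular'' statement since $j=o(t^{(1-\beta)/2})$ forces both exponents to $0$.

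One trivial remark for tidiness: you should note that $\mathrm{d}h$ may carry an atom of mass $f(0)\ge 0$ at the origin, but this is harmless since you never integrate against $\mathrm{d}h$ — you only ever use $h$ as an integrand — and the uniform bound $|h(u)|\le C(u+1)^{\beta}$ includes $u=0$.
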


Of principal importance for what follows is the decomposition:
\begin{equation}\label{basic1232}
N_j(t)=\sum_{k\geq 1}N^{(k)}_{j-1}(t-T_k)\1_{\{T_k\leq t\}},
\quad j\geq 2,\quad t\geq 0,
\end{equation}
where $N_{j-1}^{(r)}(t)$ is the number of successors in the $j$th generation with birth times within $[T_r,t+T_r]$ of the first generation individual with birth time $T_r$.
In what follows, we write $V$ for $V_1$. Note that~\eqref{basic1232} entails $\me N_j(t)=V_j(t)=V^{\ast(j)}(t)$ for $j\in\mn$ and $t\geq 0$.

Corollary \ref{mean1} is our important technical tool to be used in all subsequent proofs.
\begin{cor}\label{mean1}
Assume that the assumptions of Theorem \ref{main100} hold. Then, for some constant $C\geq 1$,
\begin{equation}\label{ineq56789}
\Big|V_j(t)-\frac{t^j}{j!{\tt m}^j}\Big|\leq \sum_{i=0}^{j-1}\binom{j}{i}\frac{C^{j-i}(t+1)^{(2-\gamma)(j-i)+i}}{i!{\tt m}^i},\quad j\in\mn,\quad t\geq 0
\end{equation}
with the same $\gamma$ as in Condition {\sc Pert($\gamma$)}. In particular, for any integer-valued function $j=j(t)$ satisfying $j(t)=o(t^{(\gamma-1)/2})$ as $t\to\infty$,
\begin{equation}\label{asymptot}
V_j(t)~\sim~\frac{t^j}{j!{\tt m}^j},\quad t\to\infty.
\end{equation}

Let $j\in\mn$ and $s\geq 0$ satisfy $(s+1)^{\gamma-1}\geq 2C{\tt m}j^2$. Then, for $1\leq k\leq j$,
\begin{equation}\label{vk}
V_k(s)\leq \frac{2(s+1)^k}{k!{\tt m}^k},
\end{equation}
\begin{equation}\label{basic123}
\sum_{i=0}^{k-1}\binom{k}{i}\frac{C^{k-i}(s+1)^{(2-\gamma)(k-i)+i}}{i!{\tt m}^i}\leq \frac{2Ck(s+1)^{k+1-\gamma}}{(k-1)!{\tt m}^{k-1}}.
\end{equation}
and
\begin{equation}\label{basic12312}
\sum_{i=0}^{k-1}\binom{k}{i}\frac{C^{k-i}(s+1)^{(2-\gamma)(k-i)+i+1}}{(i+1)!{\tt m}^{i+1}}\leq \frac{2C(s+1)^{k+2-\gamma}}{(k-1)!{\tt m}^k}.
\end{equation}
\end{cor}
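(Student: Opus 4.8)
The plan is to obtain \eqref{ineq56789} by applying Lemma \ref{prop:convolutions1} to the function $f=V$, and then to read off \eqref{asymptot} and the three estimates \eqref{vk}, \eqref{basic123}, \eqref{basic12312} from it by elementary manipulations. The crux is therefore to check that $V$ satisfies the expansion \eqref{eq:asymp_exp_assump} with $a={\tt m}^{-1}$ and $\beta=2-\gamma$ (note $\beta\in[0,1)$ since $\gamma\in(1,2]$). Conditioning $N(t)=\sum_{i\geq 1}\1_{\{S_{i-1}+\eta_i\leq t\}}$ on $(\xi_1,\eta_1)$ shows that $V$ solves the renewal equation $V=G_\eta+V\ast G$, where $G$ and $G_\eta$ are the distribution functions of $\xi$ and $\eta$; hence $V=G_\eta\ast U$ with $U:=\sum_{n\geq 0}G^{\ast n}$ the renewal function of $(S_n)_{n\geq 0}$. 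Writing $U(s)=s{\tt m}^{-1}+R(s)$ and using ${\tt m}^{-1}\int_{[0,\,t]}(t-y)\,{\rm d}G_\eta(y)=t{\tt m}^{-1}-{\tt m}^{-1}\me(\eta\wedge t)$, one gets $|V(t)-t{\tt m}^{-1}|\leq {\tt m}^{-1}\me(\eta\wedge t)+\sup_{0\leq s\leq t}|R(s)|$, and the first term on the right is $O(t^{2-\gamma})$ directly by Condition {\sc Pert($\gamma$)}.

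The remaining ingredient, and the technical heart of the corollary, is the renewal bound $R(s)=U(s)-s{\tt m}^{-1}=O(s^{2-\gamma})$. Comparing the renewal equations satisfied by $U$ and by $s\mapsto s{\tt m}^{-1}$ yields the identity $R=\bar h\ast U$ with $\bar h(s):={\tt m}^{-1}\int_s^\infty\mmp\{\xi>x\}\,{\rm d}x$, a nonincreasing function with $\bar h(0)=1$. The standard bound $\sup_{y\geq 0}(U(y+1)-U(y))\leq U(1)<\infty$ lets one dominate $(\bar h\ast U)(s)$ by a constant multiple of $\sum_{1\leq m\leq s}\bar h(m)$. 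The tail of $\xi$ under Conditions {\sc RW I}/{\sc RW II} makes $\bar h$ small: $\bar h(s)=o(s^{1-\alpha})$ when $\me\xi^\alpha<\infty$; $\bar h(s)=O(s^{1-\alpha}\ell(s))$ when $\me\xi^\alpha=\infty$ and $\alpha\in(1,2)$; and $\bar h(s)=o(s^{-1}\ell(s))$ under {\sc RW I}, where one uses $s^2\mmp\{\xi>s\}=o(\me(\xi^2\1_{\{\xi\leq s\}}))$. By Karamata's theorem these turn $\sum_{1\leq m\leq s}\bar h(m)$ into $O(s^{2-\alpha})$, $O(s^{2-\alpha}\ell(s))$ and $o$ of a slowly varying function, respectively; in each case this is $O(s^{2-\gamma})$, the verification resting on the admissible range of $\gamma$ in Condition {\sc Pert($\gamma$)} — the exponent $2-\gamma$ equals $2-\alpha$ exactly when $\me\xi^\alpha<\infty$ (whence the sharper $o(s^{1-\alpha})$ is genuinely needed there), and $2-\gamma>2-\alpha$ strictly otherwise, which absorbs the slowly varying factor and the $\alpha=2$ bound. (A bound of this type can alternatively be quoted from the literature on rates in renewal theory.) With \eqref{eq:asymp_exp_assump} in hand, Lemma \ref{prop:convolutions1} gives \eqref{ineq56789} with $a={\tt m}^{-1}$, and its ``in particular'' clause, together with $(1-\beta)/2=(\gamma-1)/2$, gives \eqref{asymptot}.

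Finally, fix $j\in\mn$ and $s\geq 0$ with $(s+1)^{\gamma-1}\geq 2C{\tt m}j^2$, and let $1\leq k\leq j$. Denote by $b_i$ the $i$th summand on the left-hand side of \eqref{basic123} (equivalently, the right-hand side of \eqref{ineq56789} with $j$, $t$ replaced by $k$, $s$). A direct computation gives $b_{i-1}/b_i=i^2C{\tt m}/((k-i+1)(s+1)^{\gamma-1})\leq k^2C{\tt m}/(s+1)^{\gamma-1}\leq 1/2$ for $1\leq i\leq k-1$, using $k\leq j$ and the hypothesis on $s$; hence $\sum_{i=0}^{k-1}b_i\leq 2b_{k-1}$, which is \eqref{basic123}. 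The summands $c_i$ on the left-hand side of \eqref{basic12312} satisfy $c_i=(s+1)b_i/((i+1){\tt m})$, so $c_{i-1}/c_i=((i+1)/i)(b_{i-1}/b_i)\leq 1/2$ under the same hypothesis (since $i(i+1)\leq k^2$ for $i\leq k-1$), and summing the geometric series gives $\sum_{i=0}^{k-1}c_i\leq 2c_{k-1}$, i.e.\ \eqref{basic12312}. For \eqref{vk}, combining \eqref{ineq56789} (with $j=k$, $t=s$) with \eqref{basic123} gives $V_k(s)\leq (s+1)^k/(k!{\tt m}^k)+2Ck(s+1)^{k+1-\gamma}/((k-1)!{\tt m}^{k-1})$, and the second term is at most $(s+1)^k/(k!{\tt m}^k)$ exactly when $2C{\tt m}k^2\leq (s+1)^{\gamma-1}$, which holds since $k\leq j$. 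The main obstacle is the sharpness demanded of the renewal bound $R(s)=O(s^{2-\gamma})$: hitting the exponent $2-\gamma$ on the nose forces one to exploit both the moment dichotomy for $\me\xi^\alpha$ and the exact range of $\gamma$ allowed by Condition {\sc Pert($\gamma$)}; everything else reduces to Lemma \ref{prop:convolutions1} or routine bookkeeping.
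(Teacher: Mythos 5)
Your proof is correct, and it follows the same overall architecture as the paper's: verify that $V$ satisfies the hypothesis of Lemma~\ref{prop:convolutions1} with $a={\tt m}^{-1}$ and $\beta=2-\gamma$, invoke \eqref{ineq}, and then do elementary bookkeeping for \eqref{vk}--\eqref{basic12312}. The one place you diverge is in establishing the renewal rate $U(t)-{\tt m}^{-1}t=O(t^{2-\gamma})$. You derive the Carlsson--Nerman identity $U(t)-{\tt m}^{-1}t=\int_{[0,t]}\mmp\{S_0^*>t-y\}\,{\rm d}U(y)$ from scratch and then control it elementarily via the uniform renewal bound $U(y+1)-U(y)\le U(1)$, followed by Karamata estimates for the tail of $\bar h(s)=\mmp\{S_0^*>s\}$ in each regime (RW~I, RW~II with $\me\xi^\alpha<\infty$, RW~II with $\me\xi^\alpha=\infty$). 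The paper instead cites this same identity (Carlsson--Nerman, formula~(2)), then appeals to Theorem~4 of Sgibnev (1981) to convert it to the asymptotic $U(t)-{\tt m}^{-1}t\sim{\tt m}^{-1}\int_0^t\mmp\{S_0^*>y\}\,{\rm d}y$, and finishes with Markov's inequality applied to $(S_0^*)^{\gamma_1-1}$ (choosing $\gamma_1\in(\gamma,\alpha)$ when $\me\xi^\alpha=\infty$, which sidesteps the slowly-varying factors you carry around). Your route is more self-contained (no black-box renewal theorem with infinite variance), at the cost of a case-by-case Karamata analysis; the paper's route is shorter but relies on the cited asymptotic. Both give exactly $O(t^{2-\gamma})$. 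For the tail estimates \eqref{vk}--\eqref{basic12312}, your consecutive-ratio argument $b_{i-1}/b_i\le 1/2$ (and $c_{i-1}/c_i\le 1/2$) is a clean repackaging of the paper's geometric-sum bound via $\binom{k}{i}\frac{k!}{i!}\le k^{2(k-i)}$ in \eqref{aux1}; the two are mathematically the same, and your deduction of \eqref{vk} from \eqref{ineq56789} together with \eqref{basic123} is exactly the paper's. No gaps.
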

\begin{proof}
We shall show that the function $V$ satisfies the assumptions of Lemma \ref{prop:convolutions1} with $a={\tt m}^{-1}$ and $\beta=2-\gamma$. Then \eqref{ineq56789} and \eqref{asymptot} are
an immediate consequence of Lemma \ref{prop:convolutions1}.

Let $S_0^\ast$ be a random variable with distribution
$$
\mmp\{S_0^\ast\in{\rm d}x\}={\tt m}^{-1}\mmp\{\xi>x\}\1_{(0,\infty)}(x){\rm d}x.
$$
Then, according to formula (2) in \cite{Carlsson+Nerman:1986},
$$U(t)-{\tt m}^{-1}t=\int_{[0,\,t]}\mmp\{S_0^\ast>t-y\}{\rm d}U(y),\quad t\geq 0,$$ where $U(t):=\sum_{i\geq 0}\mmp\{S_i\leq t\}$ for $t\geq 0$, that is, $U$ is the renewal function of $(S_i)_{i\in\mn_0}$. Since the assumption $\me\xi^2=\infty$ is equivalent to $\me S^\ast=\infty$, we conclude that
$$U(t)-{\tt m}^{-1}t~\sim~ {\tt m}^{-1} \int_0^t \mmp\{S_0^\ast>y\}{\rm d}y,\quad t\to\infty$$ by Theorem 4 in \cite{Sgibnev:1981}.

Assume that $\me\xi^\alpha<\infty$. Then $\gamma=\alpha\in (1,2)$, $\me (S_0^\ast)^{\alpha-1}<\infty$ and, by Markov's inequality, $$\int_0^t \mmp\{S_0^\ast>y\}{\rm d}y\leq (2-\alpha)^{-1}\me (S_0^\ast)^{\alpha-1}t^{2-\alpha}.$$ This together with Condition {\sc Pert$(\gamma)$} which reads $\me (\eta\wedge t)=O(t^{2-\alpha})$ entails $$V(t)-{\tt m}^{-1}t=\int_{[0,\,t]}(U(t-y)-{\tt m}^{-1}(t-y)){\rm d}\mmp\{\eta\leq y\}-{\tt m}^{-1}\me (\eta\wedge t)=O(t^{2-\alpha}),\quad t\to\infty.$$

Assume that $\me\xi^\alpha=\infty$. Since $\me\xi^\gamma_1<\infty$, hence $\me (S_0^\ast)^{\gamma_1-1}<\infty$ for all $\gamma_1\in (0,\alpha)$, the same reasoning as above leads to the conclusion
$$
\int_{[0,\,t]}(U(t-y)-{\tt m}^{-1}(t-y)){\rm d}\mmp\{\eta\leq y\}=O(t^{2-\gamma_1}),\quad t\to\infty.
$$
In conjunction with \eqref{eq:gamma_def} this yields $V(t)-{\tt m}^{-1}t=O(t^{2-\gamma})$. In particular, there exists a constant $c>0$ such that
\begin{equation}\label{lord2}
|V(t)-{\tt m}^{-1}t|\leq c (t+1)^{2-\gamma},\quad t\geq 0.
\end{equation}

Next, we prove \eqref{vk}. According to \eqref{ineq56789}, it is enough to check that
$$\sum_{i=0}^{k-1}\binom{k}{i}\frac{C^{k-i}(s+1)^{(2-\gamma)(k-i)+i}}{i!{\tt m}^i}\leq \frac{(s+1)^k}{k!{\tt m}^k},\quad 1\leq k\leq j,\quad (s+1)^{\gamma-1}\geq 2C{\tt m}j^2.$$
Using
\begin{equation}\label{eq:bin}
\binom{k}{i}\leq \frac{k!}{i!}\leq k^{k-i}
\end{equation}
and
\begin{equation}\label{eq:aux}
(s+1)^k=(s+1)^{(2-\gamma)k}(s+1)^{(\gamma-1)k},
\end{equation}
this follows from
\begin{eqnarray}\label{aux1}
\frac{k!{\tt m}^k}{(s+1)^k}\sum_{i=0}^{k-1}\binom{k}{i}\frac{C^{k-i}(s+1)^{(2-\gamma)(k-i)+i}}{i!{\tt m}^i}&=&\sum_{i=0}^{k-1}\binom{k}{i}\frac{k!}{i!}\left(\frac{C{\tt m}}{(s+1)^{\gamma-1}}\right)^{k-i} \leq \sum_{i=0}^{k-1}\left(\frac{C{\tt m}k^2}{(s+1)^{\gamma-1}}\right)^{k-i} \notag\\&\leq& \sum_{i=0}^{k-1}\left(\frac{C{\tt m}k^2}{2C{\tt m}j^2}\right)^{k-i}=\sum_{i=1}^{k}\left(\frac{k^2}{2j^2}\right)^{i}\leq \sum_{i=1}^{\infty}2^{-i}=1
\end{eqnarray}
because $k\leq j$.

Now we are passing to the proof of \eqref{basic123}. Invoking once again \eqref{eq:bin} and \eqref{eq:aux} we arrive at
\begin{eqnarray*}
\frac{(k-1)!{\tt m}^{k-1}}{k(s+1)^{k+1-\gamma}}\sum_{i=0}^{k-2}\binom{k}{i}\frac{C^{k-i}(s+1)^{(2-\gamma)(k-i)+i}}{i!{\tt m}^i}&=& \frac{(s+1)^{\gamma-1}}{{\tt m}k^2}\sum_{i=0}^{k-2}\binom{k}{i}\frac{k!}{i!}\left(\frac{C{\tt m}}{(s+1)^{\gamma-1}}\right)^{k-i}\\&\leq& \frac{(s+1)^{\gamma-1}}{{\tt m}k^2}\sum_{i=0}^{k-2}\left(\frac{C{\tt m}k^2}{(s+1)^{\gamma-1}}\right)^{k-i}\\&\leq& \frac{(s+1)^{\gamma-1}}{{\tt m}k^2} \sum_{i\geq 2} \left(\frac{C{\tt m}k^2}{(s+1)^{\gamma-1}}\right)^i \\&=& \frac{{\tt m}(Ck)^2}{(s+1)^{\gamma-1}}\Big(1-\frac{C{\tt m}k^2}{(s+1)^{\gamma-1}}\Big)^{-1}\leq C,
\end{eqnarray*}
and \eqref{basic123} follows. The proof of \eqref{basic12312} is analogous, hence omitted. The proof of the corollary is complete.

%
\end{proof}

Lemma \ref{import} will be used in the proof of relation \eqref{clt22_sn} below.
\begin{lemma}\label{import}
Let $u>0$ be fixed. Under the assumptions of Theorem \ref{main100},
\begin{equation}\label{impo121}
\lim_{t\to\infty} \frac{(\lfloor j(t)u\rfloor-1)!{\tt m}^{\lfloor j(t)u\rfloor-1}}{t^{\lfloor j(t)u\rfloor-1}}V_{\lfloor j(t)u\rfloor -1}(t(1-y/j))=~e^{-uy}
\end{equation}
for each fixed $y\geq 0$, and
\begin{equation}\label{aux0}
\lim_{T\to\infty}\limsup_{t\to\infty}\frac{(\lfloor j(t)u\rfloor-1)!{\tt m}^{\lfloor j(t)u\rfloor}}{t^{\lfloor j(t)u\rfloor-1}c_\alpha(t/j)}\int_{(Tt/j,\,t]}c_\alpha(y){\rm d}_y(-V_{\lfloor j(t)u\rfloor-1}(t-y))=0.
\end{equation}
\end{lemma}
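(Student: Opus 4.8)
The plan is to establish \eqref{impo121} and \eqref{aux0} in turn: the first drops out of the quantitative bounds of Corollary~\ref{mean1}, while the second follows from \eqref{impo121} combined with Potter's bound for regularly varying functions. Throughout I would write $k=k(t):=\lfloor j(t)u\rfloor$ and record the consequences of the hypotheses on $j$: as $t\to\infty$ one has $k\to\infty$, $(k-1)/j\to u$ and $k-1=o(t^{(\gamma-1)/2})$, hence $(k-1)^2=o(t^{\gamma-1})$, while $j(t)=o(t)$ so $t/j\to\infty$. In particular, for every fixed $\varepsilon\in(0,1)$ and all large $t$, every $s\ge\varepsilon t$ satisfies $(s+1)^{\gamma-1}\ge 2C{\tt m}(k-1)^2$, i.e.\ the hypothesis under which \eqref{vk} and \eqref{basic123} apply with index $k-1$. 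For \eqref{impo121} I would fix $y\ge0$, put $s:=t(1-y/j)$ (so $s\ge t/2$ and $s\to\infty$), multiply \eqref{ineq56789} (with $j$ replaced by $k-1$ and $t$ by $s$) by $(k-1)!\,{\tt m}^{k-1}t^{-(k-1)}$, and bound the resulting error sum using \eqref{basic123}; this yields
\[
\Bigl|\tfrac{(k-1)!\,{\tt m}^{k-1}}{t^{k-1}}V_{k-1}(s)-\bigl(\tfrac{s}{t}\bigr)^{k-1}\Bigr|\le 2C{\tt m}(k-1)^2(s+1)^{1-\gamma}\bigl(\tfrac{s+1}{t}\bigr)^{k-1}.
\]
Since $(s/t)^{k-1}=(1-y/j)^{k-1}$ and $((s+1)/t)^{k-1}=(1-y/j+1/t)^{k-1}$ both tend to $e^{-uy}$ (in each case the logarithm is $(k-1)\log(1+o(1))\to-uy$ because $(k-1)/j\to u$), while $(k-1)^2(s+1)^{1-\gamma}\le(k-1)^2(t/2)^{1-\gamma}=o(t^{\gamma-1})\,O(t^{1-\gamma})=o(1)$, the right-hand side tends to $0$, and \eqref{impo121} follows.

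The real work is \eqref{aux0}. The first thing to notice is that the crude estimate $c_\alpha(y)\le c_\alpha(t)$ on the integration range is useless: it produces a spurious factor $c_\alpha(t)/c_\alpha(t/j)\asymp j^{1/\alpha}\to\infty$. The remedy is to measure $c_\alpha$ against the scale $t/j$ and pair its growth against the rapid decay of $y\mapsto V_{k-1}(t-y)$. Since $c_\alpha$ is regularly varying of index $1/\alpha$ (it is an asymptotic inverse of the index-$\alpha$ function $x\mapsto x^\alpha/\ell(x)$), Potter's bound gives, for any fixed $\delta>0$, all large $t$ and all $y\in(Tt/j,t]$ with $T\ge1$ (which is harmless), that $c_\alpha(y)\le(1+\delta)\,c_\alpha(t/j)\,(yj/t)^{p}$ with $p:=1/\alpha+\delta$, because then $yj/t\in(T,j]$ and $y,t/j\to\infty$. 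Inserting this into the left-hand side of \eqref{aux0} (the measure ${\rm d}_y(-V_{k-1}(t-y))$ is nonnegative) and collecting constants reduces the claim to showing that
\[
\lim_{T\to\infty}\limsup_{t\to\infty}\frac{(k-1)!\,{\tt m}^{k-1}j^{p}}{t^{k-1+p}}\int_{(Tt/j,\,t]}y^{p}\,{\rm d}_y\bigl(-V_{k-1}(t-y)\bigr)=0 .
\]

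For this I would integrate by parts against ${\rm d}(y^{p})=p\,y^{p-1}{\rm d}y$, which is legitimate since $y\mapsto-V_{k-1}(t-y)$ has locally bounded variation and $V_{k-1}(0)=0$; the integral then equals the boundary term $(Tt/j)^{p}V_{k-1}(t(1-T/j))$ plus the volume term $p\int_{(Tt/j,t]}V_{k-1}(t-y)\,y^{p-1}{\rm d}y$. After normalization the boundary term equals $T^{p}\cdot(k-1)!\,{\tt m}^{k-1}t^{-(k-1)}V_{k-1}(t(1-T/j))$, which tends to $T^{p}e^{-uT}$ by \eqref{impo121}. Substituting $y=(t/j)v$ turns the (normalized) volume term into $p\int_T^{j}\phi_t(v)\,v^{p-1}{\rm d}v$, where $\phi_t(v):=(k-1)!\,{\tt m}^{k-1}t^{-(k-1)}V_{k-1}(t(1-v/j))$; by \eqref{impo121}, $\phi_t(v)\to e^{-uv}$ for each fixed $v$, by \eqref{vk} one has $\phi_t(v)\le 4e^{-uv/2}$ for $v\in[T,j/2]$ and all large $t$ (using $1-x\le e^{-x}$ and $(k-1)/j\ge u/2$), and the contribution of $v\in(j/2,j]$ is at most $4\cdot 2^{-(k-1)}j^{p}/p\to0$; dominated convergence then gives $p\int_T^{j}\phi_t(v)v^{p-1}{\rm d}v\to p\int_T^{\infty}e^{-uv}v^{p-1}{\rm d}v$. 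Combining the two pieces,
\[
\limsup_{t\to\infty}\bigl(\text{left-hand side of }\eqref{aux0}\bigr)\le(1+\delta){\tt m}\Bigl(T^{p}e^{-uT}+p\!\int_T^{\infty}\!e^{-uv}v^{p-1}{\rm d}v\Bigr),
\]
which tends to $0$ as $T\to\infty$; since the left-hand side of \eqref{aux0} is nonnegative, this proves \eqref{aux0}.

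The step I expect to be the crux is exactly this balancing act: bounding $c_\alpha(y)$ by $c_\alpha(t)$ is fatal, so one must compare $c_\alpha$ with the scale $t/j$ via Potter and then integrate by parts so that the remaining moment $\int y^{p-1}V_{k-1}(t-y)\,{\rm d}y$ exhibits the Beta-type cancellation $\int_0^1(1-w)^{p}w^{k-2}{\rm d}w\asymp k^{-1-p}$ — which, in the plan above, is effected through \eqref{impo121} and dominated convergence rather than carried out explicitly. A secondary technical point is the uniform applicability of \eqref{vk} and \eqref{basic123} on the ranges of arguments involved, which is precisely what the hypothesis $j(t)=o(t^{(\gamma-1)/2})$ secures, through $(k-1)^2=o(t^{\gamma-1})$.
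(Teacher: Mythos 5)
Your proof is correct, and the key part, relation \eqref{aux0}, is established by a genuinely different route from the paper's. For \eqref{impo121} your argument coincides in substance with the paper's: both reduce to \eqref{ineq56789}, and the error term is killed by $(k-1)^2=o(t^{\gamma-1})$.

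For \eqref{aux0}, the paper first replaces $c_\alpha$ by an infinitely differentiable version with nonincreasing derivative (invoking Theorem 1.8.3 of \cite{BGT}), integrates by parts against $c_\alpha'(y)\,{\rm d}y$, and then splits the remaining integral into a leading part $a_j(t)$ (handled by the continuity theorem for Laplace--Stieltjes transforms) and an error part $b_j(t)$ (handled via the monotonicity of $c_\alpha'$ together with \eqref{eq:bin}). You instead discard the exact form of $c_\alpha$ from the start: Potter's bound, anchored at the scale $t/j$, replaces $c_\alpha(y)$ by $(1+\delta)c_\alpha(t/j)(yj/t)^{p}$ with $p=1/\alpha+\delta$, after which the integration by parts is against $d(y^{p})$ and the passage to the limit is by plain dominated convergence, the dominating function $e^{-uv/2}v^{p-1}$ coming from \eqref{vk}. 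Both proofs share the essential insight you flag yourself: the crude bound $c_\alpha(y)\le c_\alpha(t)$ loses a factor $j^{1/\alpha}$, so one must let the growth of $c_\alpha$ on $(Tt/j,t]$ be absorbed by the decay of $y\mapsto V_{k-1}(t-y)$. Your route avoids the smooth-variation machinery and the manipulation of $c_\alpha'$, at the cost of producing only an upper bound (with a $\delta$-dependent exponent) rather than the sharp limiting constant $\frac{{\tt m}}{\alpha}\int_T^\infty e^{-y/2}y^{1/\alpha-1}{\rm d}y$ that the paper's $a_j(t)$-computation yields; since only the vanishing as $T\to\infty$ is needed, this is a harmless trade. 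One cosmetic remark: in the $v\in(j/2,j]$ regime the factor you write as $2^{-(k-1)}$ should really be, say, $(3/4)^{k-1}$ once the $+1/t$ shift in the argument of $V_{k-1}$ is accounted for, but any geometric rate beats the polynomial $j^{p}$, so the conclusion stands.
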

\begin{proof}
For notational simplicity, we only treat the case $u=1$. We first prove \eqref{impo121}. According to \eqref{ineq56789},
$$\Big|V_j(t)-\frac{t^j}{{\tt m}^j j!}\Big|\leq g_j(t), \quad j\in\mn,~~ t\geq 0,$$
where $$g_j(t):=\sum_{i=0}^{j-1}\binom{j}{i}\frac{C^{j-i}(t+1)^{(2-\gamma)(j-i)+i}}{{\tt m}^i i!},\quad j\in\mn,~~ t\geq 0.$$ It suffices to prove that, for each fixed $y>0$,
$$\lim_{t\to\infty} \frac{(j-1)!{\tt m}^{j-1}}{t^{j-1}}\frac{(t(1-y/j))^{j-1}}{{\tt m}^{j-1}(j-1)!}=e^{-y}$$ and
\begin{equation}\label{ineq122}
\lim_{t\to\infty} \frac{(j-1)!{\tt m}^{j-1}}{t^{j-1}}g_{j-1}(t(1-y/j))=0.
\end{equation}
The first of these is immediate. To prove the second, we first recall that $j(t)=o(t^{(\gamma-1)/2})$ as $t\to\infty$. Hence, for $t$ large enough, $\frac{C{\tt m}(j-1)^2}{(t+1)^{\gamma-1}}\leq 1/2$, say. Write, for such $t$, with the help of \eqref{aux1}
\begin{equation*}
\frac{(j-1)!{\tt m}^{j-1}}{(t+1)^{j-1}} g_{j-1}(t)\leq \sum_{i=0}^{j-2}\left(\frac{C{\tt m}(j-1)^2}{(t+1)^{\gamma-1}}\right)^{j-1-i}\leq \frac{C{\tt m}(j-1)^2}{(t+1)^{\gamma-1}}\left(1-\frac{C{\tt m}(j-1)^2}{(t+1)^{\gamma-1}}\right)^{-1}.
\end{equation*}
Since $$\frac{(j-1)^2}{(t(1-y/j)+1)^{\gamma-1}}~\sim~ \frac{j^2}{t^{\gamma-1}}~\to~ 0,\quad t\to\infty,$$ the last inequality entails \eqref{ineq122}.

Next, we intend to prove \eqref{aux0}. The function $c_\alpha$ is regularly varying at infinity of index $1/\alpha$, see, for instance, Lemma 6.1.3 in \cite{Iksanov:2016}. By Theorem 1.8.3 in \cite{BGT} and its proof, there exists an infinitely differentiable function $g_\alpha$ with nonincreasing derivative $g_\alpha^\prime$ which varies regularly at infinity of index $1/\alpha-1$. Without loss of generality, we can and do assume that $c_\alpha$ itself enjoys all these properties. As a consequence,
\begin{equation}\label{monotone}
\lim_{t\to\infty}\frac{tc_\alpha^\prime(t)}{c_\alpha(t)}=\frac{1}{\alpha}.
\end{equation}

Integrating by parts we infer $$\int_{(Tt/j,\,t]}c_\alpha(y){\rm d}_y(-V_{j-1}(t-y))=V_{j-1}(t(1-T/j))c_\alpha(Tt/j)+\int_{Tt/j}^t V_{j-1}(t-y)c_\alpha^\prime(y){\rm d}y.$$ In view of \eqref{impo121}, $$\lim_{t\to\infty}\frac{(j-1)!{\tt m}^j}{t^{j-1}c_\alpha(t/j)}V_{j-1}(t(1-T/j))c_\alpha(Tt/j)={\tt m}T^{1/\alpha}e^{-T}.$$
The right-hand side converges to $0$ as $T\to\infty$. Using \eqref{ineq56789} we obtain
\begin{multline*}
\frac{(j-1)!{\tt m}^j}{t^{j-1}c_\alpha(t/j)} \int_{Tt/j}^t V_{j-1}(t-y)c_\alpha^\prime(y){\rm d}y\leq \frac{{\tt m}}{t^{j-1}c_\alpha(t/j)}\int_{Tt/j}^t (t-y)^{j-1}{\rm d}c_\alpha(y)\\+\frac{(j-1)!{\tt m}^j}{t^{j-1}c_\alpha(t/j)}\sum_{i=0}^{j-2}\binom{j-1}{i}\frac{C^{j-1-i}}{{\tt m}^i i!}
\int_{Tt/j}^t (t+1-y)^{(2-\gamma)(j-1-i)+i}c_\alpha^\prime(y){\rm d}y =:a_j(t)+b_j(t).
\end{multline*}
Since $\lim_{t\to\infty}(c_\alpha(ty/j)/c_\alpha(t/j))=y^{1/\alpha}$ for each $y>0$, we infer
\begin{multline*}
a_j(t)=\frac{{\tt m}}{c_\alpha(t/j)} \int_T^j (1-y/j)^{j-1}{\rm d}_y c_\alpha(ty/j)\leq \frac{{\tt m}}{c_\alpha(t/j)} \int_T^j e^{-(j-1)y/j}{\rm d}_y c_\alpha(ty/j)\\
\leq \frac{{\tt m}}{c_\alpha(t/j)} \int_T^j e^{-y/2}{\rm d}_y c_\alpha(ty/j)\to \frac{{\tt m}}{\alpha}\int_T^\infty e^{-y/2}y^{1/\alpha-1}{\rm d}y,\quad t\to\infty.
\end{multline*}
Here, the limit relation is justified by the continuity theorem for Laplace-Stieltjes trasnforms.
The the right-hand side of the last centered formula converges to $0$ as $T\to\infty$. We claim that $\lim_{t\to\infty}\,b_j(t)=0$. To prove this, we first observe that
\begin{multline*}
\frac{1}{c_\alpha^\prime(Tt/j)} \int_{Tt/j}^t (t+1-y)^{(2-\gamma)(j-1-i)+i}c_\alpha^\prime(y){\rm d}y\leq \int_{Tt/j}^t (t+1-y)^{(2-\gamma)(j-1-i)+i}{\rm d}y\\=\frac{(t(1-T/j)+1)^{(2-\gamma)(j-1-i)+i+1}-1}{(2-\gamma)(j-1-i)+i+1}\leq \frac{t^{(2-\gamma)(j-1-i)+i+1}}{i+1},
\end{multline*}
where the first inequality follows from the fact that $c_\alpha^\prime$ is nonincreasing, and the last inequality holds for $t$ so large that $Tt/j\geq 1$ and, as a consequence, $t(1-T/j)+1\leq t$. Further, in view of \eqref{monotone}, $$\lim_{t\to\infty}\frac{(t/j)c_\alpha^\prime(Tt/j)}{c_\alpha(t/j)}=\alpha^{-1}T^{1/\alpha-1}.$$ Hence, for large $t$ and some constant $A(T)>0$, $$\frac{(t/j)c_\alpha^\prime(Tt/j)}{c_\alpha(t/j)}\leq A(T).$$ With these at hand, we infer, for large $t$,
\begin{multline*}
b_j(t)\leq \frac{(t/j)c_\alpha^\prime(t/j)}{c_\alpha(t/j)} \frac{j!{\tt m}^j}{t^j}\sum_{i=0}^{j-2}\binom{j-1}{i}\frac{C^{j-1-i}t^{(2-\gamma)(j-1-i)+i+1}}{(i+1)!{\tt m}^i}\leq A(T){\tt m}\sum_{i=0}^{j-2}\Big(\frac{C{\tt m}j^2}{t^{\gamma-1}}\Big)^{j-1-i}\\ \leq A(T) \frac{C{\tt m}^2j^2}{t^{\gamma-1}}\Big(1-\frac{C{\tt m}j^2}{t^{\gamma-1}}\Big)^{-1}~\to~0,\quad t\to\infty.
\end{multline*}
We have used \eqref{eq:bin} for the second inequality.
\end{proof}

\section{Proof of Theorem \ref{main100}}\label{sect:flt}

\subsection{Preparation}

We shall use a decomposition of $N_j-V_j$ into a `martingale' part and a `shot-noise' part obtained with the help of \eqref{basic1232}:
\begin{multline*}
N_j(t)-V_j(t)=\Big(\sum_{k\geq 1}(N^{(k)}_{j-1}(t-T_k)-V_{j-1}(t-T_k))\1_{\{T_k\leq t\}}\Big)\\
+ \Big(\sum_{k\geq 1}V_{j-1}(t-T_k)\1_{\{T_k\leq t\}}-V_j(t)\Big),\quad j\geq 2,\quad t\geq 0.
\end{multline*} We shall prove that, as $t\to\infty$,
\begin{equation}\label{clt21}
\frac{(\lfloor j(t)u\rfloor-1)!{\tt m}^{\lfloor j(t)u\rfloor}}{t^{\lfloor j(t)u\rfloor-1}c_\alpha(t/j(t))}\sum_{k\geq 1}\big(N^{(k)}_{\lfloor j(t)u\rfloor-1}(t)-V_{\lfloor j(t)u\rfloor-1}(t-T_k)\big)\1_{\{T_k\leq t\}}~{\overset{{\rm f.d.d.}}\longrightarrow}~(\Theta(u))_{u>0},
\end{equation}
where $\Theta(u):=0$ for $u>0$,
and
\begin{multline}\label{clt22_sn}
\left(\frac{(\lfloor j(t)u\rfloor-1)!{\tt m}^{\lfloor j(t)u\rfloor+1/\alpha}}{t^{\lfloor j(t)u\rfloor-1}c_\alpha(t/j(t))}\bigg(\sum_{k\geq
1}V_{\lfloor j(t)u\rfloor-1}(t-T_k)\1_{\{T_k\leq t\}}-V_{\lfloor j(t)u \rfloor}(t)\bigg)\right)_{u>0}\\
{\overset{{\rm f.d.d.}}\longrightarrow}~ \Bigg(\int_{[0,\,\infty)}e^{-uy}{\rm d}\mathcal{S}_\alpha(y)\Bigg)_{u>0},
\end{multline}
thereby showing that the asymptotics in focus is driven by the `shot-noise' part, whereas the contribution of the `martingale' part is negligible.

We start with several preparatory results which are needed for the proof of \eqref{clt22_sn}. Lemma~\ref{lem:flt} is a version of limit relation~\eqref{eq:FLT for N(t)} with a different centering.
\begin{lemma}\label{lem:flt}
Under the assumptions and notation of Theorem \ref{main100}, as $t\to\infty$,
\begin{equation}\label{eq:flt}
\Big(\frac{N(ut)-V(ut)}{{\tt m}^{-(\alpha+1)/\alpha}c_\alpha(t)}\Big)_{u\geq 0}~\Longrightarrow~ (\mathcal{S}_\alpha(u))_{u\geq 0}
\end{equation}
in the $J_1$-topology on $D$ if $\alpha=2$ and in the $M_1$-topology on $D$ if $\alpha\in (1,2)$.
\end{lemma}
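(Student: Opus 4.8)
The plan is to deduce \eqref{eq:flt} from the functional limit theorem \eqref{eq:FLT for N(t)}, which is available under the present hypotheses: the extra moment assumption $\me\eta^a<\infty$ for some $a>0$ required there holds because, as noted in the Remark following Theorem \ref{main100}, Condition {\sc Pert($\gamma$)} entails $\me\eta^{\gamma-1-\delta}<\infty$ for $\delta\in(0,\gamma-1)$ and $\gamma>1$. Thus it suffices to show that replacing the centering function ${\tt m}^{-1}\int_0^{ut}\mmp\{\eta\leq x\}{\rm d}x$ appearing in \eqref{eq:FLT for N(t)} by $V(ut)=\me N(ut)$ only introduces a deterministic perturbation which, after division by ${\tt m}^{-(\alpha+1)/\alpha}c_\alpha(t)$, tends to zero locally uniformly on $[0,\infty)$. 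Since the uniform metric on $D[0,K]$ dominates both the $J_1$- and the $M_1$-metric, this lets us conclude by a converging-together argument (see, e.g., Theorem 3.1 in \cite{Billingsley:1999}), applied on each interval $[0,K]$ and then lifted to $[0,\infty)$ because $\mathcal{S}_\alpha$ is a.s.\ continuous at every fixed point, so that restriction to $[0,K]$ is a.s.\ a continuous operation.

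For the key estimate I would write $\int_0^s\mmp\{\eta\leq x\}{\rm d}x=s-\me(\eta\wedge s)$, so that
$$
V(ut)-{\tt m}^{-1}\int_0^{ut}\mmp\{\eta\leq x\}{\rm d}x=\big(V(ut)-{\tt m}^{-1}ut\big)+{\tt m}^{-1}\me(\eta\wedge ut).
$$
By \eqref{ineq56789} (Corollary \ref{mean1}) applied with $j=1$ one has $|V(s)-{\tt m}^{-1}s|\leq C(s+1)^{2-\gamma}$ for all $s\geq 0$, while Condition {\sc Pert($\gamma$)} gives $\me(\eta\wedge s)=O(s^{2-\gamma})$ as $s\to\infty$. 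Since the implicit constants do not depend on $u$ and $(ut)^{2-\gamma}\leq K^{2-\gamma}t^{2-\gamma}$ for $u\in[0,K]$, this yields, for each fixed $K>0$,
$$
\sup_{u\in[0,\,K]}\Big|V(ut)-{\tt m}^{-1}\int_0^{ut}\mmp\{\eta\leq x\}{\rm d}x\Big|=O(t^{2-\gamma}),\quad t\to\infty.
$$

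It remains to check that $t^{2-\gamma}=o(c_\alpha(t))$. The function $c_\alpha$ is regularly varying at infinity of index $1/\alpha$ (Lemma 6.1.3 in \cite{Iksanov:2016}), so it is enough to verify the strict inequality $2-\gamma<1/\alpha$. If $\alpha\in(1,2)$ and $\me\xi^\alpha<\infty$, then $\gamma=\alpha$ and $2-\alpha<1/\alpha$ is equivalent to $(\alpha-1)^2>0$, which holds. If $\me\xi^\alpha=\infty$ (which covers $\alpha=2$ as well as part of the range $\alpha\in(1,2)$), then $\gamma\in(2-1/\alpha,\alpha)$ by the very definition of Condition {\sc Pert($\gamma$)}, so $2-\gamma<1/\alpha$ again. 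Combining this with the previous display, the scaled difference of the two centerings tends to zero uniformly on $[0,K]$ for every $K>0$, and \eqref{eq:flt} follows as outlined. The only genuinely delicate point in the argument is exactly the exponent comparison $2-\gamma<1/\alpha$: this is precisely what the lower bound $\gamma>2-1/\alpha$ in Condition {\sc Pert($\gamma$)} is there to guarantee, and it is what makes the change of centering invisible at the scale $c_\alpha(t)$.
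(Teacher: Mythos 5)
Your proposal is correct and follows essentially the same route as the paper: reduce to \eqref{eq:FLT for N(t)}, bound the difference of the two centerings by $O(t^{2-\gamma})$ uniformly on compacts, and conclude via the key exponent comparison $2-\gamma<1/\alpha$ (which, as you correctly spell out, rests on $\gamma>2-1/\alpha$, with $(\alpha-1)^2>0$ handling the boundary case $\gamma=\alpha$). The only cosmetic difference is the decomposition of the centering gap: the paper writes $V(t)-{\tt m}^{-1}\int_0^t\mmp\{\eta\leq y\}\,{\rm d}y=\int_{[0,t]}\bigl(U(t-y)-{\tt m}^{-1}(t-y)\bigr)\,{\rm d}\mmp\{\eta\leq y\}$ and applies the renewal estimate $U(t)-{\tt m}^{-1}t=O(t^{2-\gamma})$, whereas you split off $\bigl(V(t)-{\tt m}^{-1}t\bigr)+{\tt m}^{-1}\me(\eta\wedge t)$ and apply Corollary \ref{mean1} with $j=1$ plus Condition {\sc Pert}$(\gamma)$ directly; both rest on the same underlying estimates and yield the same bound.
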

\begin{proof}
Put $\nu(t):=\#\{k\in\mn_0:S_k\leq t\}$ for $t\geq 0$, so that $U(t)=\me\nu(t)$. According to Wald's identity, $U(t)={\tt m}^{-1}\E S_{\nu(t)}\geq {\tt m}^{-1}t$ for $t\geq 0$. It is shown in the proof of Corollary \ref{mean1} (see a few lines preceding \eqref{lord2}) that
\begin{equation}\label{eq:star}
U(t)-{\tt m}^{-1}t=O(t^{2-\gamma}),\quad t\to\infty.
\end{equation}
As a consequence, $$0\leq V(t)-{\tt m}^{-1}\int_0^t \mmp\{\eta\leq y\}{\rm d}y=\int_{[0,\,t]}(U(t-y)-{\tt m}^{-1}(t-y)){\rm d}\mmp\{\eta\leq y\}=O(t^{2-\gamma}),\quad t\to\infty.$$ Hence, relation \eqref{eq:flt} follows from \eqref{eq:FLT for N(t)} if we can show that
\begin{equation}\label{eq:relat}
\lim_{t\to\infty}\frac{t^{2-\gamma}}{c_\alpha(t)}=0.
\end{equation}

To prove \eqref{eq:relat}, recall that the function $c_\alpha$ is regularly varying at infinity of index $1/\alpha$ and that the $\gamma$ appearing in Condition {\sc Pert($\gamma$)} satisfies $\gamma\in (2-1/\alpha,\alpha]$. Thus, $2-\gamma<1/\alpha$. This justifies \eqref{eq:relat} and completes the proof of Lemma \ref{lem:flt}.
\end{proof}
\begin{lemma}\label{aux123}
Under the assumptions and notation of Theorem \ref{main100},
\begin{equation*}\label{230}
\lim_{t\to\infty} \me|N(t)-V(t)|/c_\alpha(t)={\tt m}^{-(\alpha+1)/\alpha}\me |\mathcal{S}_\alpha(1)|.
\end{equation*}
\end{lemma}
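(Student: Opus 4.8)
The plan is to deduce the statement from the weak convergence of Lemma~\ref{lem:flt} via a uniform integrability argument. Putting $u=1$ in~\eqref{eq:flt} and applying the continuous mapping theorem we obtain
$$
\frac{|N(t)-V(t)|}{c_\alpha(t)}~\overset{{\rm d}}{\longrightarrow}~{\tt m}^{-(\alpha+1)/\alpha}|\mathcal{S}_\alpha(1)|,\qquad t\to\infty,
$$
where $\E|\mathcal{S}_\alpha(1)|<\infty$ because $\mathcal{S}_\alpha(1)$ is an $\alpha$-stable random variable with $\alpha\in(1,2]$. By the standard fact that weak convergence of nonnegative random variables together with uniform integrability entails convergence of expectations, it suffices to show that the family $\bigl(|N(t)-V(t)|/c_\alpha(t)\bigr)_{t\geq t_0}$ is uniformly integrable for some $t_0>0$, and for this it is enough to find $p\in(1,\alpha)$ with
$$
\sup_{t\geq t_0}\frac{\E|N(t)-V(t)|^p}{c_\alpha(t)^p}<\infty.
$$

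To obtain this $L^p$-bound we compare $N$ with the renewal counting function $\nu(t):=\#\{k\in\mn_0\colon S_k\leq t\}$; then $N(t)\leq\nu(t)$, $\E\nu(t)=U(t)$, and $N(t)-V(t)=(\nu(t)-U(t))-(\nu(t)-N(t))+(U(t)-V(t))$. The deterministic term satisfies $0\leq U(t)-V(t)=O(t^{2-\gamma})=o(c_\alpha(t))$ by~\eqref{eq:star}, \eqref{lord2} and~\eqref{eq:relat}. For the term $\nu(t)-N(t)\geq0$ we combine its small mean $\E(\nu(t)-N(t))=U(t)-V(t)=O(t^{2-\gamma})$ with the crude bound $\E\nu(t)^q=O(t^q)$, valid for every $q>0$ by a Chernoff-type estimate (if $q_0:=\mmp\{\xi>c\}\in(0,1)$ for some $c>0$, then $\mmp\{\nu(t)>x\}=\mmp\{S_{\lfloor x\rfloor}\leq t\}\leq\mmp\{{\rm Bin}(\lfloor x\rfloor,q_0)\leq t/c\}$ decays geometrically in $x$ once $x>2t/(cq_0)$); log-convexity of $r\mapsto\log\E(\nu(t)-N(t))^r$ then gives, for fixed $q\in(p,\infty)$,
$$
\E(\nu(t)-N(t))^p\leq\bigl(U(t)-V(t)\bigr)^{\frac{q-p}{q-1}}\bigl(\E\nu(t)^q\bigr)^{\frac{p-1}{q-1}}=O\bigl(t^{(2-\gamma)\frac{q-p}{q-1}+q\frac{p-1}{q-1}}\bigr).
$$
The exponent on the right converges to $2-\gamma$ as $p\downarrow1$, and $2-\gamma<1/\alpha$ by the restriction on $\gamma$ in Condition~{\sc Pert($\gamma$)} (see~\eqref{eq:relat}); hence for $p$ close enough to $1$ it is $<p/\alpha$, so that $\E(\nu(t)-N(t))^p=o(c_\alpha(t)^p)$. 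Thus everything reduces to the purely renewal-theoretic estimate $\E|\nu(t)-U(t)|^p=O(c_\alpha(t)^p)$ for some $p\in(1,\alpha)$.

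This last bound is the core difficulty. Since $U(t)-{\tt m}^{-1}t=O(t^{2-\gamma})=o(c_\alpha(t))$, it is equivalent to $\E|{\tt m}\nu(t)-t|^p=O(c_\alpha(t)^p)$, for which we use the identity ${\tt m}\nu(t)-t=(S_{\nu(t)}-t)-\sum_{i=1}^{\nu(t)}(\xi_i-{\tt m})$. The overshoot obeys $0<S_{\nu(t)}-t\leq\xi_{\nu(t)}$, and conditioning on the renewal position preceding the overshooting jump, together with $\sup_{s\geq0}(U(s+1)-U(s))<\infty$, yields $\E\xi_{\nu(t)}^p\leq C\bigl(\E\xi^p+\int_0^t\E[\xi^p\1_{\{\xi>y\}}]\,{\rm d}y\bigr)=O(t^{p+1-\alpha}\ell(t))$, which is $o(c_\alpha(t)^p)$ precisely because $p<\alpha$ forces $p+1-\alpha<p/\alpha$. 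For the stopped sum of the centered martingale $M_n:=S_n-{\tt m}n$ we split over the event $\{\nu(t)\leq\rho t\}$ with $\rho>{\tt m}^{-1}$ fixed: on this event Doob's $L^p$-maximal inequality ($p>1$) bounds $\E[|M_{\nu(t)}|^p\1_{\{\nu(t)\leq\rho t\}}]$ by $(p/(p-1))^p\,\E|M_{\lfloor\rho t\rfloor}|^p$, and the latter is $O(c_\alpha(\rho t)^p)=O(c_\alpha(t)^p)$ by the sharp moment bound $\E|S_n-n{\tt m}|^p=O(c_\alpha(n)^p)$ for sums of i.i.d.\ summands in the domain of attraction of an $\alpha$-stable law (obtained from the Fuk--Nagaev inequalities and the regular variation of the tail of $\xi$); on the complementary event, whose probability $\mmp\{\nu(t)>\rho t\}=\mmp\{S_{\lceil\rho t\rceil}\leq t\}$ is geometrically small in $t$, the contribution is negligible because $|M_{\nu(t)}|\leq t+\xi_{\nu(t)}+{\tt m}\nu(t)$ has only polynomially growing moments. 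The main obstacle is exactly this sharp $L^p$-moment estimate for i.i.d.\ sums: the elementary bounds — von Bahr--Esseen, or Wald's identity applied to $\E\nu(t)$ — yield only $O(t)$, which is far too weak since $c_\alpha$ is regularly varying of index $1/\alpha<1$, so the regular variation of the tail of $\xi$ must be exploited in an essential way.
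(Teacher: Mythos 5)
Your high-level strategy matches the paper's: deduce convergence of expectations from the weak convergence in Lemma~\ref{lem:flt} via uniform integrability, established by bounding $\E|N(t)-V(t)|^p$ by $O(c_\alpha(t)^p)$ for some $p\in(1,\alpha)$. The decomposition you choose and the way you obtain the moment bound, however, diverge substantially from the paper. The paper writes
$$
N(t)-V(t)=\sum_{k\geq 0}\bigl(\1_{\{S_k+\eta_{k+1}\leq t\}}-G(t-S_k)\bigr)+\int_{[0,\,t]}G(t-x)\,{\rm d}\bigl(\nu(x)-U(x)\bigr),
$$
bounds the first term by a direct variance computation (conditional on the $S_k$'s the summands are independent centered Bernoullis, so the second moment is $\int_{[0,t]}G(1-G)\,{\rm d}U$, which is $O(\me(\eta\wedge t))=o(c_\alpha(t))$ by Sgibnev's theorem and {\sc Pert}($\gamma$)), and handles the second term by Jensen plus a direct citation of Theorems 1.1 and 1.4 of Iksanov--Marynych--Meiners (2016), which furnish $\E|\nu(s)-U(s)|^r=O(c_\alpha(s)^r)$. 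You instead split $N-V=(\nu-U)-(\nu-N)+(U-V)$, which is also correct; the Lyapunov interpolation bounding $\E(\nu-N)^p$ between $\E(\nu-N)=U-V=O(t^{2-\gamma})$ and the Chernoff-type bound $\E\nu(t)^q=O(t^q)$ is sound and works because $2-\gamma<1/\alpha$.

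The gap lies in the treatment of $\E|\nu(t)-U(t)|^p$, which is exactly where the paper invokes a published theorem while you attempt a from-scratch derivation and leave the hardest piece asserted rather than proved. Your reduction to $\E|S_n-n{\tt m}|^p=O(c_\alpha(n)^p)$ via Doob's $L^p$ maximal inequality and a large-deviation split on $\{\nu(t)\leq\rho t\}$ is structurally correct, but you identify this moment bound yourself as ``the main obstacle'' and then state it without proof, pointing only vaguely at Fuk--Nagaev. That bound is true and classical, but it is a nontrivial input --- essentially the same content as the moment theorem the paper cites --- so the proposal does not make the lemma any more self-contained; it just moves the citation one level down and omits it. In addition, the argument on the complementary event $\{\nu(t)>\rho t\}$ needs more care than stated: since $\E\xi_{\nu(t)}^{p'}$ is finite only for $p'<\alpha$, the H\"older exponent used to exploit the geometric decay of $\mmp\{\nu(t)>\rho t\}$ must be chosen so that the conjugate power of $|M_{\nu(t)}|$ stays strictly below $\alpha$; this constraint is satisfiable (you can take $p$ near $1$), but it is not spelled out, and the claim that $|M_{\nu(t)}|$ ``has only polynomially growing moments'' is not uniformly true for all exponents. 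In short: your route is correct in outline but reinvents, incompletely, the moment-convergence theorem that the paper's cleaner decomposition reduces everything to by citation.
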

\begin{proof}
Putting $u=1$ in \eqref{eq:flt} yields
\begin{equation}\label{aux100}
\frac{N(t)-V(t)}{{\tt m}^{-(\alpha+1)/\alpha}c_\alpha(t)}~{\overset{{\rm d}}\longrightarrow}~ \mathcal{S}_\alpha(1),\quad t\to\infty.
\end{equation}
Fix any $r\in (1,\alpha)$. Assume that we can show that
\begin{equation}\label{inter}
\me |N(t)-V(t)|^r=O((c_\alpha(t))^r),\quad t\to\infty.
\end{equation}
Then the family $((N(t)-V(t))/c_\alpha(t))_{t\geq 1}$ is uniformly integrable. This together with \eqref{aux100} is sufficient for completing the proof.

\noindent {\sc Proof of \eqref{inter}}. We shall use a decomposition $$N(t)-V(t)=\sum_{k\geq 0}(\1_{\{S_k+\eta_{k+1}\leq t\}}-G(t-S_k))+\int_{[0,\,t]}G(t-x){\rm d}(\nu(x)-U(x)),$$ where $G(x):=\mmp\{\eta\leq x\}$ for $x\geq 0$. In view of $$|x+y|^r\leq 2^{r-1}(|x|^r+|y|^r),\quad x,y\in\mr,$$ it suffices to check that
\begin{equation}\label{11}
\me \Big|\sum_{k\geq 0}(\1_{\{S_k+\eta_{k+1}\leq t\}}-G(t-S_k))\Big|^r=O((c_\alpha(t))^r),\quad t\to\infty
\end{equation}
and
\begin{equation}\label{12}
D(t):=\me \Big|\int_{[0,\,t]}G(t-x){\rm d}(\nu(x)-U(x))\Big|^r=O((c_\alpha(t))^r),\quad t\to\infty.
\end{equation}

We first prove \eqref{11}. By Jensen's inequality, $(\me |X|^r)^{1/r}\leq (\me X^2)^{1/2}$ for any real-valued random variable $X$. Thus, \eqref{11} follows if we can check that
$$\me \Big(\sum_{k\geq 0}(\1_{\{S_k+\eta_{k+1}\leq t\}}-G(t-S_k))\Big)^2=O((c_\alpha(t))^2),\quad t\to\infty.$$ Actually, we shall prove even more, namely, that the right-hand side is $O(c_\alpha(t))$. The last expectation is equal to $$\int_{[0,\,t]}G(t-x)(1-G(t-x)){\rm d}U(x)\leq \int_{[0,\,t]}(1-G(t-x)){\rm d}U(x)~\sim~ {\tt m}^{-1}\me (\eta\wedge t),\quad t\to\infty,$$ where the asymptotic relation is secured by Theorem 4 in \cite{Sgibnev:1981}. Recall that the function $c_\alpha$ is regularly varying at infinity of index $1/\alpha$. According to Condition {\sc Pert$(\gamma)$} and \eqref{eq:relat},
\begin{equation*}
\lim_{t\to\infty}\frac{\me(\eta\wedge t)}{c_\alpha(t)}=0,
\end{equation*}
which proves \eqref{11}.

Next, we intend to prove \eqref{12}. As has already been mentioned in the proof of Lemma \ref{import}, we can assume that $c_\alpha$ is a nondecreasing function. Integration by parts in \eqref{12} followed by an application of Jensen's inequality yields
$$D(t)=\me \Big|\int_{[0,\,t]}(\nu(t-x)-U(t-x)){\rm d}G(x))\Big|^r \leq \int_{[0,\,t]}\me |\nu(t-x)-U(t-x)|^r {\rm d}G(x).$$ By Theorems 1.1 and 1.4 in \cite{Iksanov+Marynych+Meiners:2016},
\begin{equation}\label{121}
\lim_{t\to\infty}\frac{\me |\nu(t)-{\tt m}^{-1}t|^r}{(c_\alpha(t))^r}=\me |\mathcal{S}_\alpha(1)|^r<\infty.
\end{equation}
Recalling \eqref{eq:star} and \eqref{eq:relat}, we conclude that $$\lim_{t\to\infty}\frac{U(t)-{\tt m}^{-1}t}{c_\alpha(t)}=0.$$ This together with \eqref{121} shows that
$$\lim_{t\to\infty}\frac{\me |\nu(t)-U(t)|^r}{(c_\alpha(t))^r}=\me |\mathcal{S}_\alpha(1)|^r<\infty.$$ Modifying $c_\alpha$ if needed in the right vicinity of $0$ we infer $\me |\nu(t)-{\tt m}^{-1}t|^r\leq A(c_\alpha(t))^r$ for some constant $A>0$ and all $t\geq 0$. With this at hand, $$D(t)\leq \int_{[0,\,t]}\me |\nu(t-x)-U(t-x)|^r {\rm d}G(x)\leq A\int_{[0,\,t]}(c_\alpha(t-x))^r{\rm d}G(x)=O((c_\alpha(t))^r),\quad t\to\infty.$$ We have used monotonicity of $c_\alpha$ for the last equality.
\end{proof}

Lemma \ref{iks2013} is a slight reformulation of Lemma A.5 in \cite{Iksanov:2013}.
\begin{lemma}\label{iks2013}
Let $0\leq a<b<\infty$ and, for each $n\in\mn$, $y_n: [0,\infty)\to [0,\infty)$ be a right-continuous bounded and nondecreasing function. Assume that $\lim_{n\to\infty}x_n=x$ in the $J_1$- or $M_1$-topology on $D$ and that, for each $t\geq 0$, $\lim_{n\to\infty}y_n(t)=y(t)$, where $y:[0,\infty)\to [0,\infty)$ is a bounded continuous function. Then
$$\lim_{n\to\infty}\int_{[a,\,b]}x_n(t){\rm d}y_n(t)=\int_{[a,\,b]}x(t){\rm d}y(t).$$
\end{lemma}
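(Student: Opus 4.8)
The plan is to reduce the statement to Lemma~A.5 of \cite{Iksanov:2013} after the obvious adjustment allowing a varying integrator, or, equivalently, to argue directly as follows. \emph{Step 1: from topology to pointwise behaviour.} Since $J_1$-convergence on $D$ implies $M_1$-convergence, it suffices to treat the $M_1$ case. An $M_1$-convergent sequence $x_n\to x$ has two properties I shall use: (i) $M:=\sup_n\sup_{[0,b']}|x_n|<\infty$ for any fixed $b'>b$; (ii) it converges to $x$ \emph{continuously at the continuity points of $x$}, meaning $x_n(t_n)\to x(t)$ whenever $t_n\to t$ and $t\in{\rm Cont}(x)$. Property~(ii) I would extract from uniformly convergent parametric representations $(u_n,r_n)\to(u,r)$ of the completed graphs (see \cite{Whitt:2002}): over a level $t\in{\rm Cont}(x)$ the completed graph of $x$ is the single point $(x(t),t)$, which forces $u\equiv x(t)$ on $r^{-1}(\{t\})$; picking $\sigma_n$ with $r_n(\sigma_n)=t_n$ and $u_n(\sigma_n)=x_n(t_n)$ and passing to a subsequential limit $\sigma_n\to\sigma$ gives $x_n(t_n)=u_n(\sigma_n)\to u(\sigma)=x(t)$. (In the $J_1$ case property~(ii) is immediate from the time-change representation.) This is the only genuinely topological point, and I expect it to be the main obstacle; everything downstream is soft.

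\emph{Step 2: the integrators.} By the classical fact (going back to P\'olya) that a sequence of nondecreasing functions converging pointwise on $[0,\infty)$ to a continuous limit converges uniformly on compacts, $y_n\to y$ uniformly on $[0,b']$; hence $y_n(a-)\to y(a)$ and the total masses ${\rm d}y_n([a,b])=y_n(b)-y_n(a-)$ converge to ${\rm d}y([a,b])=y(b)-y(a)$, while pointwise convergence $y_n(t)\to y(t)$ together with continuity of $y$ yields weak convergence of the finite measures ${\rm d}y_n\big|_{[a,b]}$ to ${\rm d}y\big|_{[a,b]}$. The decisive structural point is that continuity of $y$ makes ${\rm d}y$ atomless, so ${\rm d}y$ charges neither $\{a\}$, nor $\{b\}$, nor the at most countable set ${\rm Disc}(x)$; this is what renders the endpoints and the jumps of $x$ irrelevant.

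\emph{Step 3: conclusion via an extended mapping theorem.} If ${\rm d}y([a,b])=0$ the claim is immediate, since then $\bigl|\int_{[a,b]}x_n\,{\rm d}y_n\bigr|\le M\,{\rm d}y_n([a,b])\to 0$. Otherwise I would normalize, letting $T_n$ have law ${\rm d}y_n\big|_{[a,b]}/{\rm d}y_n([a,b])$ and $T$ have law ${\rm d}y\big|_{[a,b]}/{\rm d}y([a,b])$, so that $T_n$ converges in distribution to $T$ by Step~2. By~(ii) the exceptional set $\{t\in[a,b]:\,x_k(t_k)\not\to x(t)\text{ for some }t_k\to t\}$ is contained in ${\rm Disc}(x)$, hence null for the law of $T$, so the extended mapping theorem (see, e.g., \cite[Theorem~5.5]{Billingsley:1999}) would give $x_n(T_n)\to x(T)$ in distribution. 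Since $|x_n|\le M$ on $[a,b]$, the random variables $x_n(T_n)$ are uniformly bounded, whence $\me\,x_n(T_n)\to\me\,x(T)$; multiplying by ${\rm d}y_n([a,b])\to{\rm d}y([a,b])$ turns this into $\int_{[a,b]}x_n\,{\rm d}y_n\to\int_{[a,b]}x\,{\rm d}y$, as required.
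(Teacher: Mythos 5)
The paper offers no proof of this lemma: it is stated to be ``a slight reformulation of Lemma A.5 in \cite{Iksanov:2013}'', and the reader is sent there. So there is no paper-internal argument to compare against, and your self-contained proof is a welcome addition rather than a deviation.

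Your argument is correct. The only genuinely topological ingredient is, as you say, the continuous-convergence property (ii) at continuity points of $x$ under $M_1$, and your sketch via uniformly convergent parametric representations of the completed graphs is right: $(x_n(t_n),t_n)$ belongs to the completed graph of $x_n$, so a parameter $\sigma_n$ with $r_n(\sigma_n)=t_n$ and $u_n(\sigma_n)=x_n(t_n)$ exists; any subsequential limit $\sigma$ of $\sigma_n$ has $r(\sigma)=t$, and at a continuity point the completed graph of $x$ over level $t$ is the singleton $(x(t),t)$, forcing $u(\sigma)=x(t)$. The uniform local boundedness in (i) also drops out of the same representations. Step 2 (the P\'olya-type upgrade from pointwise to locally uniform convergence of the integrators, and the resulting weak convergence of the restricted measures with converging total masses) and Step 3 (normalization, the extended continuous mapping theorem applied with exceptional set contained in the countable ${\rm Disc}(x)$, which is $dy$-null by continuity of $y$, followed by bounded convergence) are all sound. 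Two cosmetic remarks: at $a=0$ the quantity $y_n(0-)$ should be read as $0$ under the usual Lebesgue--Stieltjes convention, which is harmless; and when invoking the lemma in the proof of \eqref{clt22_sn} the integrator $-V_{\lfloor ju\rfloor-1}(t(1-y/j))$ is negative-valued, so one should add the constant $V_{\lfloor ju\rfloor-1}(t)$ to put it in the form $y_n:[0,\infty)\to[0,\infty)$ required by the statement; this does not change the measure.
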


We are ready to prove \eqref{clt21} and \eqref{clt22_sn}.

\subsection{Proof of \eqref{clt21}} This proof proceeds along the lines of the proof of Theorem 3.1 in \cite{Iksanov+Marynych+Samoilenko:2020}.

For $j\in\mn$ and $t\geq 0$, put $D_j(t):={\rm Var}\, N_j(t)$ and $$I_j(t):=\me\bigg(\sum_{r\geq 1}V_{j-1}(t-T_r)\1_{\{T_r\leq t\}}-V_j(t)\bigg)^2$$ with the convention that $V_0(t)=1$ for $t\geq 0$. Our starting point is the recursive formula which is a consequence of \eqref{basic1232}: for $j\geq 2$ and $t\geq 0$,
\begin{eqnarray}\label{aux5}
D_j(t)&=&\me \bigg(\sum_{r\geq 1}\big(N^{(r)}_{j-1}(t-T_r)- V_{j-1}(t-T_r)\big)\1_{\{T_r\leq t\}}\bigg)^2\\&+& \me\bigg(\sum_{r\geq 1}V_{j-1}(t-T_r)
\1_{\{T_r\leq t\}}-V_j(t)\bigg)^2=\int_{[0,\,t]}D_{j-1}(t-y){\rm d}V(y)+I_j(t).\notag
\end{eqnarray}
Starting with $D_1(t)=I_1(t)$ and iterating \eqref{aux5}  we obtain
\begin{equation}\label{recur}
\int_{[0,\,t]}D_{j-1}(t-y){\rm d}V(y)=\sum_{k=1}^{j-1}\int_{[0,\,t]}I_k(t-y){\rm d}V_{j-k}(y),\quad j\geq 2, \quad t\geq 0.
\end{equation}
Our purpose is to show that whenever $j=j(t)\to\infty$ and $j(t)=o(t^{(\gamma-1)/2})$ as $t\to\infty$,
\begin{equation}\label{asymp}
\int_{[0,\,t]}D_{j-1}(t-y){\rm d}V(y)= O\Big(\frac{t^{2j-\gamma}}{(j-2)!(j-1)!{\tt m}^{2j-2}}\Big),\quad t\to\infty.
\end{equation}
We proceed via two steps. First, we show that $I_j$ is upper bounded by a nonnegative and nondecreasing function $h_j$, say, and that the corresponding inequality is valid for all nonnegative arguments. This leads by virtue of \eqref{recur} to a useful inequality for $D_j$ which holds for all nonnegative arguments. Second, we derive an upper bound for both $h_j$ and $D_j$ which is valid for large arguments.

\noindent {\sc Step 1}. Throughout this step it is tacitly assumed that both $j\in\mn$ and $t\geq 0$ are arbitrary.

We start with
\begin{align*}
&\hspace{-1cm}\E \sum_{r\geq 2} \sum_{1\leq i<r}V_{j-1}(t-T_i)\1_{\{T_i\leq t\}}V_{j-1}(t-T_r)\1_{\{T_r\leq t\}}\\
&\leq \E \sum_{i\geq 1}\me\big(V_{j-1}(t-T_i)\1_{\{T_i\leq t\}}\big(V_{j-1}(t-\eta_{i+1}-S_i)\1_{\{\eta_{i+1}\leq t-S_i\}}\\
&+V_{j-1}(t-\eta_{i+2}-\xi_{i+1}-S_i)\1_{\{\eta_{i+2}+\xi_{i+1}\leq t-S_i\}}+\ldots\big)|(\xi_k,\eta_k)_{1\leq i\leq k}\big)\1_{\{S_i\leq t\}}\\
&=\E \sum_{i\geq 1} V_{j-1}(t-T_i)\1_{\{T_i\leq t\}} V_j(t-S_i)\1_{\{S_i\leq t\}}\leq \E \sum_{i\geq 0} V_{j-1}(t-S_i)V_j(t-S_i)\1_{\{S_i\leq t\}}.
\end{align*}
Hence,
\begin{align}
I_j(t)&=\me \sum_{r\geq 1}V^2_{j-1}(t-T_r)\1_{\{T_r\leq t\}}+2\me\sum_{r\geq 2}\sum_{1\leq i<r} V_{j-1}(t-T_i)\1_{\{T_i\leq t\}}V_{j-1}(t-T_r)\1_{\{T_r\leq t\}}-V^2_j(t)\notag\\
& \leq V_{j-1}(t)\me \sum_{r\geq 1}V_{j-1}(t-T_r)\1_{\{T_r\leq t\}}+2\int_{[0,\,t]}V_{j-1}(t-y)V_j(t-y){\rm d}U(y)- V^2_j(t)\notag\\
&= V_{j-1}(t)V_j(t)+2\int_{[0,\,t]}V_{j-1}(t-y)V_j(t-y){\rm d}U(y)- V^2_j(t).\label{259}
\end{align}
Put $\tilde U(t):=\sum_{i\geq 1}\mmp\{S_i\leq t\}$ for $t\geq 0$. Using $\tilde U(t)=U(t)-1$ for $t\geq 0$ and \eqref{eq:star} we conclude that there exists a constant $\tilde c>0$ such that, for all $t\geq 0$,$$|\tilde U(t)-{\tt m}^{-1}t|\leq \tilde c (t+1)^{2-\gamma}.$$ With this at hand integration by parts yields
\begin{multline*}
\int_{[0,\,t]}V_{j-1}(t-y)V_j(t-y){\rm d}U(y)=V_{j-1}(t)V_j(t)+\int_{[0,\,t]}V_{j-1}(t-y)V_j(t-y){\rm d}\tilde U(y)\\
= V_{j-1}(t)V_j(t)+\int_{[0,\,t]}\tilde U(t-y){\rm d}(V_{j-1}(y)V_j(y))\leq (\tilde c (t+1)^{2-\gamma}+1)V_{j-1}(t)V_j(t)+{\tt m}^{-1}\int_0^t V_{j-1}(y)V_j(y){\rm d}y,
\end{multline*}
whence, by \eqref{259},
\begin{multline*}
I_j(t)\leq (2\tilde c (t+1)^{2-\gamma}+3) V_{j-1}(t)V_j(t)+2{\tt m}^{-1}\int_0^t V_{j-1}(y)V_j(y){\rm d}y- V^2_j(t)\\\leq (2\tilde c+3) (t+1)^{2-\gamma} V_{j-1}(t)V_j(t)+2{\tt m}^{-1}\int_0^t V_{j-1}(y)V_j(y){\rm d}y- V^2_j(t) .
\end{multline*}

Invoking \eqref{ineq56789} yields
\begin{align}
&\hspace{-1cm}2{\tt m}^{-1}\int_0^t V_{j-1}(y)V_j(y){\rm d}y\leq 2{\tt m}^{-1}\int_0^t \Big(\frac{y^{j-1}}{(j-1)!{\tt m}^{j-1}}+\sum_{i=0}^{j-2}\binom{j-1}{i}\frac{C^{j-1-i}(y+1)^{(2-\gamma)(j-1-i)+i}}{i!{\tt m}^i}\Big)\\& \times \Big(\frac{y^j}{j!{\tt m}^j}+\sum_{i=0}^{j-1}\binom{j}{i}\frac{C^{j-i}(y+1)^{(2-\gamma)(j-i)+i}}{i!{\tt m}^i}\Big){\rm d}y\notag\\
&\leq\frac{t^{2j}}{(j!)^2{\tt m}^{2j}}+2\frac{(t+1)^{j+1}}{j!{\tt m}^{j+1}}\sum_{i=0}^{j-2}\binom{j-1}{i}\frac{C^{j-1-i}(t+1)^{(2-\gamma)(j-1-i)+i}}{((2-\gamma)(j-1-i)+j+1+i)i!{\tt m}^i}\notag\\&+2 \frac{(t+1)^j}{(j-1)!{\tt m}^j} \sum_{i=0}^{j-1}\binom{j}{i}\frac{C^{j-i}(t+1)^{(2-\gamma)(j-i)+i}}{((2-\gamma)(j-i)+j+i)i!{\tt m}^i}\notag\\
&+ 2\Big( \sum_{i=0}^{j-2}\binom{j-1}{i}\frac{C^{j-1-i}(t+1)^{(2-\gamma)(j-1-i)}+i}{i!{\tt m}^i}\Big)\int_0^t \sum_{i=0}^{j-1}\binom{j}{i}\frac{C^{j-i}(y+1)^{(2-\gamma)(j-i)+i}}{i!{\tt m}^{i+1}}{\rm d}y\notag\\&\leq \frac{t^{2j}}{(j!)^2{\tt m}^{2j}}+ 2 \frac{(t+1)^{j+1}}{(j+1)!{\tt m}^{j+1}}\sum_{i=0}^{j-2}\binom{j-1}{i}\frac{C^{j-1-i}(t+1)^{(2-\gamma)(j-1-i)+i}}{i!{\tt m}^i}\notag\\&+2 \frac{(t+1)^j}{j!{\tt m}^j} \sum_{i=0}^{j-1}\binom{j}{i}\frac{C^{j-i}(t+1)^{(2-\gamma)(j-i)+i}}{i!{\tt m}^i}\notag\\&+2\Big( \sum_{i=0}^{j-2}\binom{j-1}{i}\frac{C^{j-1-i}(t+1)^{(2-\gamma)(j-1-i)}+i}{i!{\tt m}^i}\Big)\Big(\sum_{i=0}^{j-1}\binom{j}{i}\frac{C^{j-i}(t+1)^{(2-\gamma)(j-i)+i+1}}{(i+1)!{\tt m}^{i+1}}\Big)\notag\\&=:\frac{t^{2j}}{(j!)^2{\tt m}^{2j}}+\tilde{f}_j(t).
\label{deffj}
\end{align}
Appealing to \eqref{ineq56789} once again we obtain
\begin{multline*}
V^2_j(t)-\frac{t^{2j}}{(j!)^2{\tt m}^{2j}}=\Big(V_j(t)+\frac{t^j}{j!{\tt m}^j}\Big)\Big(V_j(t)-\frac{t^j}{j!{\tt m}^j}\Big)\geq -\Big(V_j(t)+\frac{t^j}{j!{\tt m}^j}\Big)\sum_{i=0}^{j-1}\binom{j}{i}\frac{C^{j-i}(t+1)^{(2-\gamma)(j-i)+i}}{i!{\tt m}^i}\\
=-\Big(V_j(t)+\frac{t^j}{j!{\tt m}^j}\Big)g_j(t):=-\tilde{g}_j(t).
\end{multline*}
Note that both $\tilde{f}_j$ and $\tilde{g}_j$ are nonnegative nondecreasing functions. Summarizing
\begin{equation}\label{defhj}
I_j(t)\leq (2\tilde c+3) (t+1)^{2-\gamma} V_{j-1}(t)V_j(t)+\tilde{f}_j(t)+\tilde{g}_j(t)=:\tilde{h}_j(t).
\end{equation}
Since $\tilde{h}_j$ is a nondecreasing function, we further infer
$$D_{j-1}(t)=\sum_{k=1}^{j-1}\int_{[0,\,t]} I_k(t-y){\rm d}V_{j-k-1}(y)\leq \tilde{h}_{j-1}(t)
+\sum_{k=1}^{j-2}\tilde{h}_k(t)V_{j-k-1}(t),\quad j\geq 2,\quad t\geq 0.$$

\noindent {\sc Step 2}. Fix now $j\in\mn$ and $s\geq 0$ satisfying $(s+1)^{\gamma-1}\geq 2C{\tt m}j^2$ and let $1\leq k\leq j$. Here, $C$ is the same as in \eqref{ineq56789}. Throughout this step we tacitly assume that all formulae hold true for this range of parameters.

By \eqref{vk}, $$V_{k-1}(s)V_k(s)\leq \frac{4(s+1)^{2k-1}}{(k-1)!k!{\tt m}^{2k-1}}\leq \frac{4(s+1)^{2k-1}}{((k-1)!)^2{\tt m}^{2k-1}}.$$ Next, we show that $$\tilde{f}_k(s)\leq \frac{12C(s+1)^{2k+1-\gamma}}{((k-1)!)^2{\tt m}^{2k-1}}.$$ Indeed, according to \eqref{basic123},
\begin{multline*}
2\frac{(s+1)^{k+1}}{(k+1)!{\tt m}^{k+1}}\sum_{i=0}^{k-2}\binom{k-1}{i}\frac{C^{k-1-i}(s+1)^{(2-\gamma)(k-1-i)+i}}{i!{\tt m}^i}\leq 2 \frac{(s+1)^{k+1}}{(k+1)!{\tt m}^{k+1}} \frac{2C(k-1)(s+1)^{k-\gamma}}{(k-2)!{\tt m}^{k-2}}\\\leq\frac{4C (s+1)^{2k+1-\gamma}}{((k-1)!)^2{\tt m}^{2k-1}}.
\end{multline*}
Analogously, $$2\frac{(s+1)^k}{k!{\tt m}^k} \sum_{i=0}^{k-1}\binom{k}{i}\frac{C^{k-i}(s+1)^{(2-\gamma)(k-i)+i}}{i!{\tt m}^i}\leq \frac{4C(s+1)^{2k+1-\gamma}}{((k-1)!)^2{\tt m}^{2k-1}}.$$
Finally, the third summand in the definition of $\tilde{f}_k$ can be treated as follows:
\begin{multline*}
2\Big(\sum_{i=0}^{k-2}\binom{k-1}{i}\frac{C^{k-1-i}(s+1)^{(2-\gamma)(k-1-i)+i}}{i!{\tt m}^i}\Big)\Big(\sum_{i=0}^{k-1}\binom{k}{i}\frac{C^{k-i}(s+1)^{(2-\gamma)(k-i)+i+1}}{(i+1)!{\tt m}^{i+1}}\Big)\\\leq 2\frac{2C(k-1)(s+1)^{k-\gamma}}{(k-2)!{\tt m}^{k-2}}\frac{2C(s+1)^{k+2-\gamma}}{(k-1)!{\tt m}^k}=\frac{8C^2 (s+1)^{2k+2-2\gamma}}{((k-2)!)^2{\tt m}^{2k-2}}\leq \frac{4C (s+1)^{2k+1-\gamma}}{((k-1)!)^2{\tt m}^{2k-1}}.
\end{multline*}
Here, we have used \eqref{basic123} and \eqref{basic12312} to bound the first and second factor, respectively, and the inequality $(s+1)^{\gamma-1}\geq 2C{\tt m}(k-1)^2$ for the last passage. Finally, $$\tilde{g}_k(s)\leq \frac{6C(s+1)^{2k+1-\gamma}}{((k-1)!)^2 {\tt m}^{2k-1}}$$ by \eqref{vk} and \eqref{basic123}. Summarizing, we have shown that
\begin{equation}\label{ik}
\tilde{h}_k(s)\leq \frac{A(s+1)^{2k+1-\gamma}}{((k-1)!)^2 {\tt m}^{2k-1}},
\end{equation}
where $A:=12+8 \tilde c+18C$.

Further, we obtain, for $s$ satisfying $(s+1)^{\gamma-1}\geq 2\max (c,1){\tt m}j^2=:a_j$, where $c$ is as given in \eqref{lord2},
\begin{align}
D_{j-1}(s)&\leq \tilde{h}_{j-1}(s)+\sum_{k=1}^{j-2}\tilde{h}_k(s)V_{j-k-1}(s)\notag\\
&\leq \frac{A(s+1)^{2j-1-\gamma}}{((j-2)!)^2 {\tt m}^{2j-3}}+2A\sum_{k=1}^{j-2}\frac{(s+1)^{j+k-\gamma}}{(j-k-1)!((k-1)!)^2 {\tt m}^{j+k-2}}\notag\\
&= \frac{A(s+1)^{2j-1-\gamma}}{((j-2)!)^2 {\tt m}^{2j-3}}\Big(1+2 \sum_{k=1}^{j-2}\binom{j-2}{k-1}\frac{(j-2)!}{(k-1)!}\Big(\frac{{\tt m}}{s+1}\Big)^{j-k-1}\Big)\notag\\
&\leq \frac{A(s+1)^{2j-1-\gamma}}{((j-2)!)^2 {\tt m}^{2j-3}}\Big(1+2\frac{{\tt m} j^2}{s+1}\Big(1-\frac{{\tt m} j^2}{s+1}\Big)^{-1}  \Big)\leq \frac{3A(s+1)^{2j-1-\gamma}}{((j-2)!)^2 {\tt m}^{2j-3}}.\label{dj}
\end{align}
Here, the first inequality is just formula \eqref{defhj}, the second inequality is implied by \eqref{vk} and \eqref{ik}, and the third inequality is justified by \eqref{eq:bin}.

Assume now that $j=j(t)\to\infty$ and $j(t)=o(t^{(\gamma-1)/2})$ as $t\to\infty$, so that the inequality $t\geq a_j$ holds true for large enough $t$. We intend to prove \eqref{asymp}.
To this end, we write
\begin{align*}
\int_{[0,\,t]}D_{j-1}(t-y){\rm d}V(y)&=\int_{[0,\,t-a_j]} D_{j-1}(t-y){\rm d}V(y)+\int_{(t-a_j,\,t]} D_{j-1}(t-y){\rm d}V(y)\\
&\leq \frac{3A}{((j-2)!)^2 {\tt m}^{2j-3}}\int_{[0,\,t+1]}(t+1-y)^{2j-1-\gamma}{\rm d}V(y)+ \left(\max_{s\in [0,\,a_j]}D_{j-1}(s)\right)
U(a_j)\\
&\leq \frac{3A(t+1)^{2j-\gamma}}{((j-2)!)^2(2j-\gamma){\tt m}^{2j-2}}+\frac{3Ac(t+1)^{2j+1-2\gamma}}{((j-2)!)^2 {\tt m}^{2j-3}}+\left(\max_{s\in [0,\,a_j]}D_{j-1}(s)\right)U(a_j)
\end{align*}
having utilized \eqref{dj} and $V(x+y)-V(x)\leq U(y)$ for $x,y\in\mr$ (for the proof, see formula (40) in \cite{Bohun etal:2022}) for the first inequality and integration by parts together with \eqref{lord2} for the second. The asymptotic relation $$\frac{3Ac(t+1)^{2j+1-2\gamma}}{((j-2)!)^2 {\tt m}^{2j-3}}=o\Big(\frac{t^{2j-\gamma}}{(j-2)!(j-1)!{\tt m}^{2j-2}}\Big),\quad t\to\infty$$ is a consequence of $j(t)=o(t^{\gamma-1})$ as $t\to\infty$. Using \eqref{vk} and \eqref{dj} for the second inequality below we further obtain
$$
\left(\max_{s\in [0,\,a_j]}D_{j-1}(s)\right)U(a_j)\leq (D_{j-1}(a_j)+V^2_{j-1}(a_j))U(a_j)\leq \Big(\frac{3A(a_j+1)^{2j-1-\gamma}}{((j-2)!)^2 {\tt m}^{2j-3}}+\frac{4(a_j+1)^{2j-2}}{((j-1)!)^2 {\tt m}^{2j-2}}\Big)U(a_j).
$$
By the elementary renewal theorem, with $a=2\max(c,1){\tt m}$,
$$
\frac{(j-2)!(j-1)!{\tt m}^{2j-2}}{t^{2j-\gamma}}\frac{(a_j+1)^{2j-2}}{((j-1)!)^2 {\tt m}^{2j-2}} U(a_j)~\sim~\Big(\frac{aj^2}{t}\Big)^{2j-\gamma}\frac{a j}{{\tt m}}\frac{1}{(aj^2)^{2-\gamma}}~\to~ 0,\quad t\to\infty
$$
because $\lim_{t\to\infty} j^b(aj^2/t)^{2j-2}=0$ for any $b>0$. The last two limit relation hold true whenever $j(t)=o(t^{1/2})$ and particularly under the assumption $j(t)=o(t^{(\gamma-1)/2})$. Analogously,
$$
\frac{(j-2)!(j-1)!{\tt m}^{2j-2}}{t^{2j-\gamma}}\frac{(a_j+1)^{2j-1-\gamma}}{((j-2)!)^2 {\tt m}^{2j-3}} U(a_j)~\sim~\Big(\frac{aj^2}{t}\Big)^{2j-\gamma}j~\to~ 0,\quad t\to\infty.
$$
Thus, $$
\left(\max_{s\in [0,\,a_j]}D_{j-1}(s)\right)U(a_j)=o\Big(\frac{t^{2j-\gamma}}{(j-2)!(j-1)!{\tt m}^{2j-2}}\Big),
$$
and \eqref{asymp} follows.

According to the Cram\'{e}r-Wold device and Markov's inequality, relation \eqref{clt21} follows if we can show that, for any fixed $u>0$,
\begin{equation}\label{eq:aux12}
\lim_{t\to\infty}\frac{((\lfloor j(t)u\rfloor-1)!)^2{\tt m}^{2\lfloor j(t)u\rfloor}}{t^{2\lfloor j(t)u\rfloor-2}c^2_\alpha(t/j(t))}\me
\Big(N_{\lfloor j(t)u\rfloor}(t)-\sum_{r\geq 1}V_{\lfloor j(t)u\rfloor-1}(t-T_r)\1_{\{T_r\leq t\}}\Big)^2=0.
\end{equation}
In view of \eqref{aux5} and \eqref{asymp}, the left-hand side is the big $O$ of
$$\frac{jt^{2-\gamma}}{c^2_\alpha(t/j)}=\frac{j^2}{t^{\gamma-1}}\frac{t/j}{c^2_\alpha(t/j)}.$$ The first factor on the right-hand side is $o(1)$ by assumption. We claim that
\begin{equation}\label{eq:aux10}
\lim_{x\to\infty}x^{-1}c^2_\alpha(x)=\infty,
\end{equation}
so that the second factor on the right-hand side is $o(1)$, too, which proves \eqref{eq:aux12}.

To check \eqref{eq:aux10}, recall that the function $c_\alpha$ is regularly varying at infinity of index $1/\alpha$ which particularly entails
\begin{equation}\label{eq:inter}
\lim_{x\to\infty}c_\alpha(x)=\infty.
\end{equation}
In the case $\alpha\in (1,2)$, the regular variation implies \eqref{eq:aux10}. Assume now that $\alpha=1/2$. Then $c_2$ satisfies $\ell(c_2(x))\sim x^{-1}c^2_2(x)$ as $x\to\infty$, where $\ell$ is a slowly varying diverging to infinity function, see \eqref{eq:slowly varying 2nd moment}. Recalling \eqref{eq:inter} we infer \eqref{eq:aux10} with $\alpha=2$.

\subsection{Proof of \eqref{clt22_sn}} In what follows we write $j$ for $j(t)$. According to the Cram\'{e}r-Wold device, it is enough to show that for any $r\in\mn$, any real $\alpha_1,\ldots, \alpha_r$ and any
$0<u_1<\ldots<u_r<\infty$, as $t\to\infty$,
\begin{equation}\label{fidi}
\sum_{i=1}^r \alpha_i\frac{(\lfloor j u_i\rfloor-1)!{\tt m}^{\lfloor j u_i\rfloor+1/\alpha} Z(j u_i,
t)}{t^{\lfloor ju_i\rfloor-1}c_\alpha(t/j)}~{\overset{{\rm d}}\longrightarrow}~\sum_{i=1}^r \alpha_i u_i\int_0^\infty \mathcal{S}_\alpha(y)e^{-u_i y}{\rm d}y,
\end{equation}
where
$$
Z(ju, t):=\sum_{k\geq 1}V_{\lfloor ju\rfloor-1}(t-T_k)\1_{\{T_k\leq t\}}-V_{\lfloor ju\rfloor}(t),\quad u>0.
$$

For any $u, T>0$ and sufficiently large $t$,
\begin{eqnarray*}
\frac{(\lfloor j u\rfloor-1)!{\tt m}^{\lfloor j u\rfloor+1/\alpha} Z(ju,t)}{t^{\lfloor ju\rfloor -1} c_\alpha(t/j)}&=&\frac{(\lfloor j u\rfloor-1)!{\tt m}^{\lfloor j u\rfloor+1/\alpha}}{t^{\lfloor ju\rfloor -1}c_\alpha(t/j)}
\int_{[0,\,t]}V_{\lfloor ju\rfloor-1}(t-y){\rm d}(N(y)-V(y))\\&=&\frac{(\lfloor j u\rfloor-1)!{\tt m}^{\lfloor j u\rfloor-1}}{t^{\lfloor ju\rfloor-1}} \int_{[0,\,T]}\frac{N(yt/j)-V(yt/j)}
{{\tt m}^{-(\alpha+1)/\alpha}c_\alpha(t/j)}{\rm d}_y(- V_{\lfloor ju\rfloor-1}(t(1-y/j)))\\&+&\frac{(\lfloor j u\rfloor-1)!{\tt m}^{\lfloor j(t)u\rfloor+1/\alpha}}{t^{\lfloor ju\rfloor -1}c_\alpha(t/j)}\int_{(Tt/j,\,t]}(N(y)-V(y)){\rm d}_y(-V_{\lfloor ju\rfloor-1}(t-y)).
\end{eqnarray*}
By Lemma \ref{lem:flt}, $$\Big(\frac{N(ut/j)-V(ut/j)}{{\tt m}^{-(\alpha+1)/\alpha}c_\alpha(t/j)}\Big)_{u\geq 0}~\Longrightarrow~ (\mathcal{S}_\alpha(u))_{u\geq 0}$$
in the $J_1$-topology on $D$ if $\alpha=2$ and in the $M_1$-topology on $D$ if $\alpha\in (1,2)$. Here, we have used the assumption $t/j(t)\to\infty$. By Skorokhod's
representation theorem there exist versions $\widehat N_t$ and $\widehat{\mathcal{S}}_\alpha$ of $((N(ut/j)-V(ut/j))/({\tt m}^{-(\alpha+1)/\alpha}c_\alpha(t/j))_{u\geq 0}$ and $\mathcal{S}_\alpha$, respectively such that
\begin{equation}\label{cs}
\lim_{t\to\infty}\widehat N_t(y)=\widehat{\mathcal{S}}_\alpha(y)\quad\text{a.s.}
\end{equation}
in the $J_1$-topology on $D$ if $\alpha=2$ and in the $M_1$-topology on $D$ if $\alpha\in (1,2)$. In view of \eqref{impo121}, $$\lim_{t\to\infty} \frac{(\lfloor j u\rfloor-1)!{\tt m}^{\lfloor j u\rfloor-1}}{t^{\lfloor ju\rfloor-1}}V_{\lfloor ju\rfloor-1}(t(1-y/j))= e^{-uy},\quad t\to\infty$$ for each fixed $y\geq 0$. By Lemma \ref{iks2013}, this in combination with \eqref{cs}
yields
$$\lim_{t\to\infty}\sum_{i=1}^r \alpha_i\frac{(\lfloor j u\rfloor-1)!{\tt m}^{\lfloor j u\rfloor-1}}{t^{\lfloor ju\rfloor-1}}  \int_0^T \widehat N_t(y){\rm d}_y(-V_{\lfloor ju\rfloor-1}(t(1-y/j)))=\sum_{i=1}^r \alpha_i
u_i\int_0^T \widehat{\mathcal{S}}_\alpha(y)e^{-u_iy}{\rm d}y\quad \text{a.s.}$$ and
thereupon
$$
\sum_{i=1}^r \alpha_i \frac{(\lfloor j u\rfloor-1)!{\tt m}^{\lfloor j u\rfloor-1}}{t^{\lfloor ju\rfloor-1}}\int_{[0,\,T]}\frac{N(yt/j)-V(yt/j)}{{\tt m}^{-(\alpha+1)/\alpha}c_\alpha(t/j)}{\rm d}_y(-V_{\lfloor ju\rfloor-1}(t(1-y/j)))~{\overset{{\rm
d}}\longrightarrow} ~\sum_{i=1}^r \alpha_i u_i\int_0^T \mathcal{S}_\alpha(y)e^{-u_iy}{\rm
d}y,
$$
as $t\to\infty$. Since $\lim_{T\to\infty}\sum_{i=1}^r \alpha_i u_i \int_0^T
\mathcal{S}_\alpha(y)e^{-u_iy}{\rm d}y=\sum_{i=1}^r \alpha_i u_i \int_0^\infty
\mathcal{S}_\alpha (y)e^{-u_iy}{\rm d}y$ a.s.\ we are left with proving that
$$\lim_{T\to\infty}{\lim\sup}_{t\to\infty}\,\mmp\bigg\{\bigg|\sum_{i=1}^r\alpha_i \frac{(\lfloor j u_i\rfloor-1)!{\tt m}^{\lfloor j u\rfloor+1/\alpha}}{t^{\lfloor ju_i\rfloor-1}c_\alpha(t/j)}\int_{(Tt/j,\,t]}(N(y)-V(y)){\rm d}(-V_{\lfloor ju\rfloor-1}(t-y))\bigg|>\varepsilon\bigg\}=0$$ for all $\varepsilon>0$. By Lemma \ref{aux123}, $\me |N(y)-V(y)| \sim {\tt m}^{-(\alpha+1)/\alpha} \me |\mathcal{S}_\alpha(1)|c_\alpha(y)$ as $y\to\infty$. With this at hand, the last limit relation follows from Markov's inequality and \eqref{aux0} The proof of \eqref{clt22_sn} is complete.

\vspace{5mm}

\noindent {\bf Acknowledgement}. The present work was supported by the National Research Foundation of Ukraine (project 2020.02/0014 'Asymptotic regimes of perturbed random walks: on the edge of modern and classical probability').


\begin{thebibliography}{99}

\footnotesize

\bibitem{Alsmeyer+Iksanov+Marynych:2017} G. Alsmeyer, A. Iksanov and A. Marynych, \textit{Functional limit theorems for the number of occupied boxes in the Bernoulli sieve}. Stoch. Proc. Appl. \textbf{127} (2017), 995--1017.

\bibitem{Basrak+Conroy+Cravioto+Palmowski:2022} B. Basrak, M. Conroy, M. Olvera-Cravioto and Z. Palmowski, \textit{Importance sampling for maxima on trees}, Preprint (2020) available at {\tt https://arxiv.org/abs/2004.08966}

\bibitem{Billingsley:1999} P. Billingsley, \textit{Convergence of probability measures}, 2nd edition. John Wiley and Sons, 1999.

\bibitem{BGT} N. H. Bingham, C. M. Goldie and J. L. Teugels,
\textit{Regular variation}. Cambridge University Press, 1989.

\bibitem{Bohun etal:2022} V. Bohun, A. Iksanov, A. Marynych and B. Rashytov, \textit{Renewal theory for iterated perturbed random walks on a general branching process tree: intermediate generations}. J. Appl. Probab. \textbf{59} (2022), to appear. Preprint available at {\tt https://arxiv.org/abs/2012.03341}

\bibitem{Buraczewski+Dovgay+Iksanov:2020} D. Buraczewski, B. Dovgay and A. Iksanov, \textit{On intermediate levels of nested occupancy scheme in random environment generated by stick-breaking I}. Electron. J. Probab. \textbf{25} (2020), paper no. 123, 24 pp.

\bibitem{Carlsson+Nerman:1986} H. Carlsson and O. Nerman, \textit{An alternative proof of Lorden's renewal inequality}. Adv. Appl.
Probab. \textbf{18} (1986), 1015--1016.

\bibitem{Gnedin+Iksanov+Marynych:2010} A. Gnedin, A. Iksanov and A. Marynych, \textit{Limit theorems for the number of occupied boxes
in the Bernoulli sieve}. Theory Stochastic Process. \textbf{16(32)} (2010), 44--57.


\bibitem{Goldie+Maller:2000} C.~M. Goldie and R.~A. Maller, \textit{Stability of perpetuities}. Ann. Probab. \textbf{28}
(2000), 1195--1218.

\bibitem{Iksanov:2013} A. Iksanov, \textit{Functional limit theorems for renewal shot noise processes with increasing response functions}. Stoch. Proc. Appl. \textbf{123} (2013), 1987--2010.

\bibitem{Iksanov:2016} A. Iksanov, \textit{Renewal theory for perturbed random walks and similar processes}. Birkh\"{a}user, 2016.

\bibitem{Iksanov+Jedidi+Bouzeffour:2018} A. Iksanov, W. Jedidi and F. Bouzeffour, \textit{Functional limit theorems for the number of busy servers in a $G/G/\infty$ queue}. J. Appl. Probab. \textbf{55} (2018), 15--29.

\bibitem{Iksanov+Marynych+Meiners:2016} A. Iksanov, A. Marynych and M. Meiners, \textit{Moment convergence of first-passage times in renewal theory}. Statist. Probab. Letters. \textbf{119} (2016), 134--143.

\bibitem{Iksanov+Marynych+Samoilenko:2020} A. Iksanov, A. Marynych and I. Samoilenko, \textit{On intermediate levels of nested occupancy scheme in random environment generated by stick-breaking II}. Preprint (2020) available at {\tt https://arxiv.org/abs/2011.12231}

\bibitem{Iksanov+Rashytov+Samoilenko:2021+} A. Iksanov, B. Rashytov and I. Samoilenko, \textit{Renewal theory for iterated perturbed random walks on a general branching process tree: early generations}. Preprint (2021) available at {\tt https://arxiv.org/abs/2105.02846}

\bibitem{Jacod+Shiryaev:2003} J. Jacod and A.~N. Shiryaev, \textit{Limit theorems for stochastic processes}, 2nd Edition, Springer, 2003.

\bibitem{Sgibnev:1981} M.~ S. Sgibnev, \textit{Renewal theorem in the case of an infinite variance}. Sib. Math. J. \textbf{22} (1982), 787--796.


\bibitem{Whitt:2002} W. Whitt, \textit{Stochastic-process limits:  an introduction to stochastic-process limits and their application to queues}. Springer-Verlag, 2002.

\end{thebibliography}
\end{document}